\newcommand{\rrVert}{\Vert}
\newcommand{\llVert}{\Vert}
\newtheorem{thm}{Theorem}
\newtheorem{lemma}{Lemma}
\newtheorem{cor}{Corollary}
\newtheorem{prop}{Proposition}
\newtheorem{Assumption}{Assumption}
\theoremstyle{definition}
\newtheorem{remark}{Remark}
\theoremstyle{remark}
\def\R{\mathbb{R}}
\def\E{\mathbb{E}}
\def\P{\mathbb{P}}
\def\F{\mathbb{F}}
\def\HH{ \EuFrak H}
\newcommand{\ud}{\mathrm{d}}
\begin{document}
\begin{frontmatter}

\title{Asymptotic normality of randomized periodogram for~estimating quadratic
variation in mixed Brownian--fractional Brownian model}

\author[a]{\inits{E.}\fnm{Ehsan}\snm{Azmoodeh}\corref{cor1}}\email{ehsan.azmoodeh@uni.lu}
\cortext[cor1]{Corresponding author.}

\author[b]{\inits{T.}\fnm{Tommi}\snm{Sottinen}}\email{tommi.sottinen@iki.fi}

\author[c]{\inits{L.}\fnm{Lauri}\snm{Viitasaari}}\email{lauri.viitasaari@aalto.fi}
\address[a]{Mathematics Research Unit, Luxembourg University,\\P.O. Box L-1359, Luxembourg}
\address[b]{Department of Mathematics and Statistics, University of Vaasa,\\P.O. Box 700, FIN-65101 Vaasa, Finland}
\address[c]{Department of Mathematics and System Analysis,\\Aalto University School of Science, Helsinki,\\
P.O. Box 11100, FIN-00076 Aalto, Finland\\
Department of Mathematics, Saarland University,\\Post-fach 151150, D-66041 Saarbr\"ucken, Germany}

\markboth{E.~Azmoodeh et al.}{Asymptotic normality of randomized
periodogram for estimating quadratic variation}

\begin{abstract}
We study asymptotic normality of the randomized periodogram estimator
of qua\-dratic variation in the mixed Brownian--fractional Brownian
model. In the semimartingale case, that is, where the Hurst parameter
$H$ of the fractional part satisfies $H\in(3/4,1)$, the central limit
theorem holds. In the nonsemimartingale case, that is, where
$H\in(1/2,3/4]$, the convergence toward the normal distribution with
a~nonzero mean still holds if $H=3/4$, whereas for the other values,
that~is, $H \in(1/2,3/4)$, the central convergence does not take
place. We also provide Berry--Esseen estimates for the estimator.
\end{abstract}

\begin{keyword}
Central limit theorem\sep
multiple Wiener integrals\sep
Malliavin calculus\sep
fractional Brownian motion\sep
quadratic variation\sep
randomized periodogram

\MSC[2010] 60G15, 60H07, 62F12\vspace*{-9pt}
\end{keyword}

\received{17 November 2014}
%
\revised{30 March 2015}
\accepted{24 April 2015}
\publishedonline{11 May 2015}
\end{frontmatter}

\section{Introduction and motivation}

The quadratic variation, or the pathwise volatility, of stochastic
processes is of\break paramount importance in mathematical finance. Indeed,
it was the major discovery of the celebrated article by Black and
Scholes \cite{bs73} that the prices of financial derivatives depend
only on the volatility of the underlying asset. In the Black--Scholes
model of geometric Brownian motion, the volatility simply means the
variance.
Later the Brownian model was extended to more general semimartingale
models. Delbaen and Schachermayer \cite{ds94,ds98} gave the final
word on the pricing of financial derivatives with semimartingales. In
all these models, the volatility simply meant the variance or the
semimartingale quadratic variance.
Now, due to the important article by F\"ollmer \cite{f81}, it is clear
that the variance is not the volatility. Instead, one should consider
the pathwise quadratic variation. This revelation and its implications
to mathematical finance has been studied, for example, in
\cite{bsv08,sk99}.

An important class of pricing models is the mixed Brownian--fractional
Brownian model. This is a model where the quadratic variation is
determined by the Brownian part and the correlation structure is
determined by the fractional Brownian part. Thus, this is a pricing
model that captures the long-range dependence while leaving the
Black--Scholes pricing formulas intact. The mixed Brownian--fractional
Brownian model has been studied in the pricing context, for example,
in \cite{am06,bsv07,bsv11}.

By the hedging paradigm the prices and hedges of financial derivative
depend only on the pathwise quadratic variation of the underlying
process. Consequently, the statistical estimation of the quadratic
variation is an important problem. One way to estimate the quadratic
variation is to use directly its definition by the so-called
\textit{realized quadratic variation}. Although the consistency result
(see \xch{Section}{Subsection}~\ref{rqv-1}) does not depend on a specific choice of
the sampling scheme, the asymptotic distribution does. There are
numerous articles that study the asymptotic behavior of realized
quadratic variation; see \cite{bn-sh-1, bn-s, jacod, fuka, hjy11} and
references therein. Another approach, suggested by Dzhaparidze and
Spreij \cite{d-s}, is to use the randomized periodogram estimator. In
\cite{d-s}, the case of semimartingales was studied. In
\cite{azm-val}, the randomized periodogram estimator was studied for
the mixed Brownian--fractional Brownian model, and the weak consistency
of the estimator was proved. This article investigates the asymptotic
normality of the randomized periodogram estimator for the mixed
Brownian--fractional Brownian model.

The rest of the paper is organized as follows. In Section
\ref{estimation}, we briefly introduce the two estimators for the
quadratic variation already mentioned.
In Section \ref{analysis}, we introduce the stochastic analysis for
Gaussian processes needed for our results. In particular, we introduce
the F\"ollmer pathwise calculus and Malliavin calculus. Section
\ref{beef} contains our main results: the central limit theorem for the
randomized periodogram estimator and an associated Berry--Esseen bound.
Finally, some technical calculations are deferred into Appendix~\ref{A}
and Appendix \ref{B}.

\section{Two methods for estimating quadratic variation}\label{estimation}

\subsection{Using discrete observations: realized quadratic
variation}\label{rqv-1}

It is well known that (see \cite[Chapter 6]{protter}) for a
semimartingale $X$, the bracket $[X,X]$ can be identified
with
\eject

\[
[X,X]_t = \P\hbox{-} \lim_{|\pi| \to0} \sum
_{t_k\in\pi} ( X_{t_k} - X _{t_{k-1}} ) ^2
,
\]
where $\pi=  \lbrace t_k : 0= t_0 < t_1 <\cdots< t_n =
t \rbrace$ is a partition of the interval $[0,t]$, $|\pi| = \max
 \lbrace t_k - t_{k-1} : t_k \in\pi \rbrace$, and $\P
\hbox{-} \lim$ means convergence in probability. Statistically
speaking, the sums of squared increments $($\textit{realized quadratic
variation}$)$ is a consistent estimator for the bracket as the volume
of observations tends to infinity. Barndorff-Nielsen and Shephard
\cite{bn-s} studied precision of the realized quadratic variation
estimator for a special class of continuous semimartingales. They
showed that sometimes the realized quadratic variation estimator can be
a rather noisy estimator. So one should seek for new estimators of the
quadratic variation.

\subsection{Using continuous observations: randomized periodogram}\label
{spectral-charac}

Dzhaparidze and Spreij \cite{d-s} suggested another characterization of
the bracket $[X,X]$. Let $\F^X$ be the filtration of $X$, and $\tau$
be a finite stopping time. For $\lambda\in\R$, define the
\textit{periodogram} $I_\tau(X;\lambda)$ of $X$ at $\tau$ by
\begin{align}
I_\tau(X;\lambda) :& = \bigg| \int_0^\tau e^{i\lambda s} \ud X_s \bigg|^2\nonumber\\
&= 2 \ \textbf{Re} \int_{0}^{\tau} \int_{0}^{t} e^{i \lambda(t-s)} \ud X_s \ud X_t + [X,X]_\tau\quad(\text{by It\^o formula}).\label{eq:perio}
\end{align}
Let $\xi$ be a symmetric random variable independent of the filtration
$\F^X$ with density $g_\xi$ and real characteristic function
$\varphi_\xi$. For given $L>0$, define the randomized periodogram by
\begin{equation}
\label{eq:p-random} \E_\xi I_\tau ( X; L\xi ) = \int
_\R I_\tau ( X; Lx ) g_\xi(x) \ud x .
\end{equation}
If the characteristic function $\varphi_\xi$ is of bounded variation,
then Dzhaparidze and Spreij have shown that we have the following
characterization of the bracket as $L \to\infty$:
\begin{equation}
\label{eq:d-s} \E_\xi I_\tau ( X; L\xi ) \stackrel{\P} {\to}
[X,X]_\tau .
\end{equation}

Recently, the convergence (\ref{eq:d-s}) is extended in \cite{azm-val}
to some class of stochastic processes which contains nonsemimartingales
in general. Let $W= \{W_{t}\}_{t\in[0,T]}$ be a standard Brownian
motion, and $B^{H}= \{B^{H}_{t}\}_{t \in[0,T]}$ be a fractional
Brownian motion with Hurst parameter $H \in(\frac{1}{2},1)$,
independent of the Brownian motion $W$. Define the mixed
Brownian--fractional Brownian motion $X_{t}$ by
\begin{equation*}
X_{t}= W_{t}+B^{H}_{t}, \quad t \in[0,T] .
\end{equation*}

\begin{remark}

It is known that $($see \cite{ch}$)$ the process $X$ is an $(\F
^X , \P)$-semi\-martingale if $H \in( \frac{3}4,1)$, and for $H\in
 ( \frac{1}2 , \frac{3}4  ]$, $X$ is not a semimartingale with
respect to its own filtration $\F^X$. Moreover, in both cases, we have
\begin{equation}
\label{eq:mixed-bracket} [X,X]_{t}= t .
\end{equation}
If the partitions in (\ref{eq:mixed-bracket}) are nested, that is, for
each $n$, we have $\pi^{(n)} \subset\pi^{(n+1)}$, then the convergence
can be strengthened to almost sure convergence. Hereafter, we always
assume that the sequences of partitions are nested.
\end{remark}

Given $\lambda\in\R$, define the periodogram of $X$ at $T$ as
$($\ref{eq:perio}$)$, that is,
\begin{equation*}
\begin{split}
I_{T}(X;\lambda) & = \bigg| \int
_{0}^{T}e^{i\lambda t} \ud X_{t} \bigg|
^{2}
\\
& = \bigg| e^{i\lambda T}X_{T}-i\lambda\int_{0}^{T}
X_{t}e^{i\lambda t} \ud t \bigg|^2
\\
& = X_T ^{2} + X_T \int_{0}^{T}
i\lambda\bigl( e^{i \lambda(T-t)} - e^{- i
\lambda(T-t)} \bigr) X_t \ud t
+ \lambda^2 \bigg| \int_{0}^{T}
e^{i \lambda
t} X_t \ud t \bigg| ^2. \end{split} %
\end{equation*}

Let $(\tilde{\varOmega},\tilde{\mathcal{F}},\tilde{\mathbb{P}})$ be
another probability space. We identify the $\sigma$-algebra
$\mathcal{F}$ with $\mathcal{F} \otimes\{ \phi, \tilde{\varOmega} \}$ on
the product space $(\varOmega\times\tilde{\varOmega}, \mathcal{F} \otimes
\tilde{\mathcal{F}},\mathbb{P}\otimes\tilde{\mathbb{P}})$. Let
$\xi:\tilde{\varOmega} \rightarrow\R$ be a~real symmetric random variable
with density $g_\xi$ and independent of the filtration $\F^X$.
For any positive real number $L$, define the randomized periodogram $\E
_{\xi} I_{T}(X;L\xi)$ as in (\ref{eq:p-random}) by
\begin{equation}
\label{eq:r-p-m-b-fb} \E_{\xi} I_{T}(X;L\xi):=\int
_{\mathbb{R}}I_{T}(X;Lx)g_\xi(x) \ud x,
\end{equation}
where the term $I_T(X;Lx)$ is understood as before. Azmoodeh and
Valkeila \cite{azm-val} proved the following:
\begin{thm}\label{t:main}
Assume that $X$ is a mixed Brownian--fractional Brownian motion,\break
$\E_\xi I_T(X; L\xi) $ be the randomized periodogram given by
\emph{(\ref{eq:r-p-m-b-fb})}, and
\[
\E\xi^2 < \infty.
\]
Then, as $L \to\infty$, we have
\begin{equation*}
\label{eq:main} \E_{\xi} I_{T}(X;L\xi) \stackrel{\P} {
\longrightarrow} [X,X]_T .
\end{equation*}
\end{thm}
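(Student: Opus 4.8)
The plan is to reduce the randomized periodogram to quantities whose convergence we already understand from the pathwise quadratic variation and then control the error by the assumption $\E\xi^2<\infty$ together with the independence of $\xi$ from $\F^X$. First I would take the representation of the periodogram obtained from It\^o's formula in~(\ref{eq:perio}),
\[
I_T(X;\lambda) = 2\,\textbf{Re}\int_0^T\!\!\int_0^t e^{i\lambda(t-s)}\,\ud X_s\,\ud X_t + [X,X]_T,
\]
and substitute $\lambda=Lx$, then integrate against $g_\xi$. Since $[X,X]_T=T$ is deterministic, the second term contributes exactly $[X,X]_T$ after integrating the density to $1$, so everything comes down to showing that the \emph{randomized} double-integral term,
\[
R_L := 2\,\textbf{Re}\int_\R\!\left(\int_0^T\!\!\int_0^t e^{iLx(t-s)}\,\ud X_s\,\ud X_t\right) g_\xi(x)\,\ud x,
\]
tends to $0$ in probability as $L\to\infty$.

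The key step is to interchange the $\ud x$-integration with the (pathwise) double integration. Using Fubini — justified because $X$ has finite quadratic variation and the integrand is bounded by $1$ in modulus on the compact simplex $\{0\le s\le t\le T\}$, while $g_\xi$ is a probability density — one gets
\[
R_L = 2\,\textbf{Re}\int_0^T\!\!\int_0^t \varphi_\xi\bigl(L(t-s)\bigr)\,\ud X_s\,\ud X_t,
\]
where $\varphi_\xi$ is the (real, by symmetry) characteristic function of $\xi$. The inner integral against $\ud X_s=\ud W_s+\ud B^H_s$ is handled on each path via the F\"ollmer/forward integral (for the $B^H$-part) and the It\^o integral (for the $W$-part); the decomposition $X=W+B^H$ lets me treat the four cross terms separately. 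For the Brownian pieces one can invoke the isometry and a dominated-convergence argument, using that $\varphi_\xi(L\cdot)\to 0$ pointwise on $(0,T]$ by the Riemann--Lebesgue lemma applied to $g_\xi\in L^1$; for the fractional and mixed pieces one estimates the forward integral pathwise, again pushing the decay of $\varphi_\xi(Lu)$ through. A uniform bound $|\varphi_\xi|\le 1$ supplies the dominating function, and the finiteness of $\E\xi^2$ (hence of the relevant second moments of the stochastic integrals) makes the dominated convergence rigorous in $L^2(\P)$, which gives convergence in probability.

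The main obstacle I anticipate is making the pathwise (F\"ollmer) double integral against $\ud B^H$ genuinely well defined and showing $\varphi_\xi(L(t-s))\to 0$ can be passed inside it without a semimartingale structure — for $H\in(1/2,3/4]$ the process $X$ is not a semimartingale, so one cannot simply appeal to the It\^o isometry for the whole integral. The way around this is to combine the representation of the double integral in terms of a second-chaos random variable (writing the iterated integral as a multiple Wiener--It\^o integral with respect to the Gaussian process generating $W$ and $B^H$) with the isometry in the second Wiener chaos: the $L^2$-norm of $R_L$ becomes a deterministic double integral of $\varphi_\xi(L(t-s))\varphi_\xi(L(t'-s'))$ against the covariance kernel of $X$, and this integral is easily seen to vanish as $L\to\infty$ by dominated convergence on $[0,T]^4$. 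This route avoids any appeal to semimartingality and yields the desired convergence $\E_\xi I_T(X;L\xi)\xrightarrow{\P}[X,X]_T$ uniformly over the two regimes of $H$.
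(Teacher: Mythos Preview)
Note first that the paper does not prove Theorem~\ref{t:main} here: it is quoted from \cite{azm-val}. What the present paper does provide is the machinery of Lemmas~\ref{l:mixed-f} and~\ref{cor:relation} and the proof of Theorem~\ref{main-thm}, which together yield a much sharper statement and, in particular, imply Theorem~\ref{t:main}. Your overall strategy---reduce to the representation with $\varphi_\xi$ and then show the remainder tends to zero---matches that machinery.

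There is, however, a genuine gap in your final paragraph. The iterated \emph{pathwise} integral
\[
R_L \;=\; 2\int_0^T\!\!\int_0^t \varphi_\xi\bigl(L(t-s)\bigr)\,\ud X_s\,\ud X_t
\]
is \emph{not} a pure second-chaos random variable, so you cannot compute $\E[R_L^2]$ by the Wiener--It\^o isometry alone. As Lemma~\ref{cor:relation} (via Proposition~\ref{prop:div-pathwise}) makes explicit, the pathwise integral differs from the Skorokhod integral by a deterministic trace correction coming from the $\ud B^H$--$\ud B^H$ cross term:
\[
R_L \;=\; I_2^X(\psi_L)\;+\;2\alpha_H\int_0^T\!\!\int_0^T \varphi_\xi\bigl(L|t-s|\bigr)\,|t-s|^{2H-2}\,\ud s\,\ud t.
\]
Your chaos isometry argument handles only $\operatorname{Var}(R_L)=2\|\psi_L\|_{\HH^{\otimes 2}}^2$, not $\E[R_L^2]$; the mean $\E[R_L]$ is precisely this trace term and must be shown to vanish separately. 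It does---dominated convergence with the integrable kernel $|t-s|^{2H-2}$ and $\varphi_\xi(Lu)\to 0$ suffices, and the paper gives the exact rate $L^{1-2H}$ in~\eqref{eq:trace_limit}---but your sketch does not acknowledge its existence. Once you insert this step, your argument is essentially the paper's: split $R_L$ into a second-chaos piece and a trace, and drive each to zero.

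A secondary point: your justification of the Fubini interchange (``$X$ has finite quadratic variation and the integrand is bounded'') is too casual for a pathwise double integral against a non-semimartingale; in the paper this representation is Lemma~\ref{l:mixed-f}, imported from \cite{azm-val}, and it is where the hypothesis $\E\xi^2<\infty$ is actually used. Your proposal never invokes that hypothesis in a concrete way.
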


\section{Stochastic analysis for Gaussian processes}\label{analysis}

\subsection{Pathwise It\^o formula}\label{path-follmer}

F{\"o}llmer \cite{f81} obtained a pathwise calculus for continuous
functions with finite\break quadratic variation. The next theorem
essentially belongs to F{\"o}llmer. For a~nice exposition and its use
in finance, see Sondermann \cite{sondermann}.

\begin{thm}[\cite{sondermann}] \label{t:f-ito}
Let $X :[0,T] \rightarrow\R$ be a continuous process with continuous
quadratic variation $[X,X]_{t}$, and let $F\in C^{2}(\R)$. Then for any
$t\in[0,T]$, the limit of the Riemann--Stieltjes sums
\begin{equation*}
\lim_{ |\pi| \to0} \sum_{
\begin{subarray}{1}
t_{i}\leqq t
\end{subarray}
}
F_x(X_{t_{i-1}}) (X_{t_{i}}-X_{t_{i-1}}):= \int
_{0}^{t}F_x(X_{s}) \ud
X_{s}
\end{equation*}
exists almost surely. Moreover, we have
\begin{equation}
\label{eq:f-ito} F(X_{t})=F(X_{0})+\int_{0}^{t}F_x(X_{s})
\ud X_{s}+\frac{1}{2} \int_{0}^{t}
F_{xx}(X_{s}) \ud[X,X]_{s}.
\end{equation}
\end{thm}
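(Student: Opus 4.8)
The plan is to proceed by a Taylor expansion argument along the nested partitions, following Föllmer's original approach. Fix $t \in [0,T]$ and a sequence of partitions $\pi^{(n)}$ of $[0,t]$ with $|\pi^{(n)}| \to 0$. For each partition, write the telescoping identity $F(X_t) - F(X_0) = \sum_{t_i \le t} (F(X_{t_i}) - F(X_{t_{i-1}}))$ and apply the second-order Taylor formula to each summand:
\begin{equation*}
F(X_{t_i}) - F(X_{t_{i-1}}) = F_x(X_{t_{i-1}})(X_{t_i} - X_{t_{i-1}}) + \tfrac12 F_{xx}(X_{t_{i-1}})(X_{t_i} - X_{t_{i-1}})^2 + R_i,
\end{equation*}
where $R_i = \tfrac12 (F_{xx}(\eta_i) - F_{xx}(X_{t_{i-1}}))(X_{t_i} - X_{t_{i-1}})^2$ for some intermediate point $\eta_i$ between $X_{t_{i-1}}$ and $X_{t_i}$. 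Summing over $i$ splits the increment $F(X_t)-F(X_0)$ into three pieces: the first-order Riemann--Stieltjes-type sum, the second-order sum against squared increments, and the remainder $\sum_i R_i$.

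The key steps are then to control the limits of the three pieces. The second-order sum $\sum_i \tfrac12 F_{xx}(X_{t_{i-1}})(X_{t_i}-X_{t_{i-1}})^2$ converges almost surely to $\tfrac12 \int_0^t F_{xx}(X_s)\,\ud[X,X]_s$: this follows from the definition of continuous quadratic variation, since $F_{xx}(X_{\cdot})$ is continuous (hence the sum is a Riemann--Stieltjes approximation of the integral with respect to the increasing continuous function $[X,X]_{\cdot}$, using that along nested partitions the discrete measures $\sum_i (X_{t_i}-X_{t_{i-1}})^2 \delta_{t_{i-1}}$ converge weakly to $\ud[X,X]_s$). The remainder satisfies
\begin{equation*}
\bigg| \sum_i R_i \bigg| \le \tfrac12 \sup_{|u-v| \le \varpi_n} |F_{xx}(X_u) - F_{xx}(X_v)| \cdot \sum_i (X_{t_i}-X_{t_{i-1}})^2,
\end{equation*}
where $\varpi_n$ is the oscillation of $X$ over subintervals of $\pi^{(n)}$; since $X$ is continuous and $F_{xx}$ is uniformly continuous on the compact range of $X$, the supremum tends to $0$ while the sum stays bounded, so $\sum_i R_i \to 0$. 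Consequently the first-order sum $\sum_i F_x(X_{t_{i-1}})(X_{t_i}-X_{t_{i-1}})$, being the difference of three convergent quantities, converges almost surely; we define its limit to be $\int_0^t F_x(X_s)\,\ud X_s$, and rearranging yields \eqref{eq:f-ito}.

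The main obstacle is the convergence of the second-order sum to the Lebesgue--Stieltjes integral $\tfrac12\int_0^t F_{xx}(X_s)\,\ud[X,X]_s$: one must argue carefully that weak convergence of the empirical quadratic-variation measures (which is essentially the \emph{definition} of having a continuous quadratic variation along the chosen nested sequence) combined with the continuity and boundedness of the integrand $s \mapsto F_{xx}(X_s)$ gives convergence of the integrals. This is where the nestedness and continuity hypotheses are genuinely used, and it is the only step that is not a routine Taylor estimate. Everything else reduces to uniform continuity of $F_{xx}\circ X$ on a compact interval and boundedness of $\sum_i (X_{t_i}-X_{t_{i-1}})^2$.
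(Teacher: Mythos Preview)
The paper does not prove this theorem at all: it is stated with a citation to Sondermann and to F\"ollmer's original note, and no argument is given in the text. Your proposal reproduces the standard F\"ollmer proof (Taylor expansion to second order, identification of the quadratic-variation term, uniform-continuity control of the remainder, and definition of the pathwise integral as the forced limit of the first-order sums), which is exactly the argument in the cited references and is correct.
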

%


The rest of the section contains the essential elements of Gaussian
analysis and Malliavin calculus that are used in this paper. See, for
instance, \xch{Refs.}{the references} \cite{n-p, nualart} for further details. In
what follows, we assume that all the random objects are defined on a
complete probability space $(\varOmega,\mathcal{F},\P)$.

\subsection{Isonormal Gaussian processes derived from covariance
functions}\label{isonormal-1}
\vspace*{4pt}
Let $X=\{ X_t \}_{t\in[0,T]}$ be a centered continuous Gaussian
process on the interval $[0,T]$ with $X_0=0$ and continuous covariance
function $R_X(s,t)$. We assume that $\mathcal{F}$ is generated by $X$.
Denote by $\mathcal{E}$ the set of real-valued step functions on
\xch{$[0,T]$,}{$[0,T],$} and let $\HH$ be the Hilbert space defined as the closure of
$\mathcal{E}$ with respect to the scalar product
\[
\langle \mathbf{1}_{[0,t]},\mathbf{1}_{[0,s]}
\rangle_{\HH}=R_{X}(t,s), \quad s,t \in[0,T].
\]
For example, when $X$ is a Brownian motion, $\HH$ reduces to the
Hilbert space\break$L^2([0,T],\ud t)$. However, in general, $\HH$ is
not a
space of functions, for example, when $X$ is a fractional Brownian
motion with Hurst parameter $H \in(\frac{1}{2},1)$ (see \cite{p-t}).
The mapping $\mathbf{1}_{[0,t]}\longmapsto X_t$ can be extended to a
linear isometry between $\HH$ and the Gaussian space $\mathcal{H}_{1}$
spanned by a Gaussian process $X$. We denote this isometry by $\varphi
\longmapsto X(\varphi)$, and $\{ X(\varphi); \, \varphi\in\HH\}$ is
an isonormal Gaussian process in the sense of \cite[Definition
1.1.1]{nualart}, that is, it is a Gaussian family with covariance
function
\begin{equation*}
\begin{split} \E \bigl( X(\varphi_1) X(
\varphi_2) \bigr) &= \langle\varphi_1,\varphi
_2 \rangle_\HH
\\[3pt]
& = \int_{[0,T]^2} \varphi_1(s)
\varphi_2(t) \ud R_X(s,t), \quad \varphi_1,
\varphi_2 \in\mathcal{E}, \end{split} %
\end{equation*}
where $\ud R_X(s,t) = R_X(\ud s,\ud t)$ stands for the measure induced
by the covariance function $R_X$ on $[0,T]^2$. Let $\mathcal{S}$ be the
space of smooth and cylindrical random variables of the form\vspace{2pt}
\begin{equation}
F=f\bigl(X(\varphi_{1}),\ldots,X(\varphi_{n})\bigr),
\label{g1}
\end{equation}
where $f\in C_{b}^{\infty}(\mathbb{R}^{n})$ ($f$ and all its partial
derivatives are bounded). For a~random variable $F$ of the form \
(\ref{g1}), we define its Malliavin derivative as the $\HH$-valued
random variable
\begin{equation*}
DF=\sum_{i=1}^{n}\frac{\partial f}{\partial x_{i}}
\bigl(X(\varphi _{1}),\ldots,X(\varphi_{n})\bigr)
\varphi_{i}\text{. }
\end{equation*}

By iteration, the $m$th derivative $D^{m}F\in L^{2}(\varOmega
;\HH^{\otimes m})$ is defined for every $m\geq2$. For $m\geq1$,
${\mathbb{D}}^{m,2}$ denotes the closure of $\mathcal{S}$ with respect
to the norm $\Vert\cdot\Vert_{m,2}$, defined by the relation\vspace*{2pt}
\begin{equation*}
\Vert F\Vert_{m,2}^{2}\;=\; \E \bigl[ |F|^{2}
\bigr] +\sum_{i=1}^{m}\E \bigl(\Vert
D^{i}F\Vert_{\HH^{\otimes i}}^{2} \bigr) .
\end{equation*}
Let $\delta$ be the adjoint of the operator $D$, also called the
\textsl{divergence operator}. A~random element $u\in L^{2}(\varOmega,\HH)
$ belongs to the domain of $\delta$, denoted $\mathrm{Dom}(\delta)$,
if and only if it satisfies
\begin{equation*}
\big|\E\langle DF,u\rangle_{\HH}\big|\leq c_{u}\,\Vert F
\Vert_{L^{2}}
\end{equation*}
for any $F\in\mathbb{D}^{1,2}$, where $c_{u}$ is a constant depending
only on $u$. If $u\in\mathrm{Dom}(\delta)$, then the random variable
$\delta(u)$ is defined by the duality relationship
\begin{equation}
\E\bigl(F\delta(u)\bigr)= \ E\langle DF,u\rangle_{\HH}, \label{ipp}
\end{equation}
which holds for every $F\in{\mathbb{D}}^{1,2}$. The divergence
operator $\delta$ is also called the
Skorokhod integral because when the Gaussian process $X$ is a Brownian
motion, it coincides with the anticipating stochastic integral
introduced by Skorokhod~\cite{nualart}. We denote $\delta(u)=\int_0^T
u_t \delta X_t$.

For every $q\geq1$, the symbol $\mathcal{H}_{q}$ stands for the $q$th
{\it Wiener chaos} of~$X$, defined as the closed linear subspace of
$L^2(\varOmega)$ generated by the family $\{H_{q}(X(h)) : h\in~\HH,\llVert  h\rrVert  _{ \HH}=1\}$,
where $H_{q}$ is the $q$th Hermite
polynomial defined as
\begin{equation}
\label{hq} H_q(x) = (-1)^q e^{\frac{x^2}{2}}
\frac{d^q}{dx^q} \bigl( e^{-\frac{x^2}{2}} \bigr).
\end{equation}
We write by convention $\mathcal{H}_{0} = \mathbb{R}$. For any $q\geq
1$, the mapping $I^{X}_{q}(h^{\otimes q})=H_{q}(X(h))$ can be extended
to a linear isometry between the symmetric tensor product $ \HH^{\odot
q}$ (equipped with the modified norm $\sqrt{q!}\llVert  \cdot\rrVert
_{ \HH^{\otimes q}}$) and the $q$th Wiener chaos $\mathcal{H}_{q}$. For
$q=0$, we write by convention $I^{X}_{0}(c)=c$, $c\in\mathbb{R}$. For
any $h \in \HH^{\odot q}$, the random variable $I^X_q(h)$ is called a
multiple Wiener--It\^o integral of order $q$. A crucial fact is that if
$\HH= L^2(A,\mathcal{A},\nu)$, where $\nu$ is a $\sigma$-finite and
nonatomic measure on the measurable space $(A,\mathcal{A})$, then $
\HH^{\odot q} = L^2_s(\nu^q)$, where $L^2_s(\nu^q)$ stands for the
subspace of $L^2(\nu^q)$ composed of the symmetric functions. Moreover,
for every $h \in \HH^{\odot q}=L^2_s(\nu^q)$, the random variable
$I^X_q(h)$ coincides with the $q$-fold multiple Wiener--It\^o integral
of $h$ with respect to the centered Gaussian measure (with control
$\nu$) generated by $X$ (see \cite{nualart}). We will also use the
following central limit theorem for sequences living in a fixed Wiener
chaos (see \cite{nualart-peccati, n-o}).


\begin{thm}\label{t.2.2}
Let $\{F_{n}\}_{n\geq1}$ be a sequence of random variables in the
$q$th Wiener chaos, $q\geq2$, such that $\lim_{n\rightarrow\infty}
\E(F_{n}^{2})=\sigma^{2}$. Then, as $n \to\infty$, the following
asymptotic statements are equivalent:
\begin{itemize}
\item[\rm(i)] $F_n$ converges in law to $\mathscr{N}(0,\sigma^2)$.
\item[\rm(ii)] $ \|DF_n\|^2_{\HH} $ converges in $L^2$ to $q \sigma
^{2}$.
\end{itemize}
\end{thm}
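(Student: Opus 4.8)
The plan is to run the classical Malliavin--Stein argument. The engine is an integration-by-parts identity: since $F_n$ lies in the $q$th Wiener chaos, it is an eigenfunction of the Ornstein--Uhlenbeck operator $L=-\delta D$ with eigenvalue $-q$, that is, $\delta(DF_n)=qF_n$. Feeding this into the duality relation (\ref{ipp}) together with the chain rule $D(\varphi(F_n))=\varphi'(F_n)\,DF_n$, one obtains, for every $\varphi\in C_b^1(\R)$,
\begin{equation}
\E\bigl(F_n\,\varphi(F_n)\bigr)=\frac1q\,\E\bigl(\varphi'(F_n)\,\llVert DF_n\rrVert_\HH^2\bigr).
\label{eq:ibp-plan}
\end{equation}
Set $T_n:=q^{-1}\llVert DF_n\rrVert_\HH^2$; taking $\varphi(x)=x$ in (\ref{eq:ibp-plan}) gives $\E(T_n)=\E(F_n^2)\to\sigma^2$.

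For the implication (ii)$\Rightarrow$(i) I would test (\ref{eq:ibp-plan}) against $\varphi(x)=e^{itx}$. Writing $\phi_n(t)=\E(e^{itF_n})$, the identity becomes $\phi_n'(t)=-t\,\E\bigl(e^{itF_n}T_n\bigr)$, so that
\[
\bigl|\phi_n'(t)+\sigma^2 t\,\phi_n(t)\bigr|\le|t|\,\E\bigl|T_n-\sigma^2\bigr|\le|t|\,\bigl(\E\bigl|T_n-\sigma^2\bigr|^2\bigr)^{1/2}\longrightarrow0
\]
uniformly for $t$ in compact sets, since $T_n\to\sigma^2$ in $L^2$ by (ii). As $t\mapsto e^{-\sigma^2t^2/2}$ is the unique solution of $\psi'=-\sigma^2 t\,\psi$ with $\psi(0)=1$, a Gr\"onwall comparison forces $\phi_n(t)\to e^{-\sigma^2t^2/2}$ pointwise, and L\'evy's continuity theorem yields $F_n\to\mathscr{N}(0,\sigma^2)$ in law.

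The substantial direction is (i)$\Rightarrow$(ii). Since $\E(T_n)\to\sigma^2$, it suffices to show $\mathrm{Var}(T_n)\to0$. First, convergence in law together with hypercontractivity on the fixed chaos $\mathcal{H}_q$ (on which all $L^p$-norms are equivalent) makes $\{F_n^4\}$ uniformly integrable, hence $\E(F_n^4)\to\E\bigl(\mathscr{N}(0,\sigma^2)^4\bigr)=3\sigma^4$. Next, taking $\varphi(x)=x^3$ in (\ref{eq:ibp-plan}) gives $\E(F_n^4)=3\,\E(F_n^2 T_n)$. Writing $F_n=I^X_q(f_n)$ and expanding both $\llVert DF_n\rrVert_\HH^2$ and $F_n^2$ into Wiener chaoses by means of the product formula, one checks that $\mathrm{Var}(T_n)$ and the fourth cumulant $\E(F_n^4)-3(\E F_n^2)^2$ are, up to combinatorial constants, the same positive linear combination of the squared norms of the contractions $f_n\otimes_r f_n$, $r=1,\dots,q-1$, modulo terms vanishing because $\E(F_n^2)\to\sigma^2$. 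Consequently $\mathrm{Var}(T_n)\le C_q\bigl(\E(F_n^4)-3(\E F_n^2)^2\bigr)+o(1)\to0$, which is precisely (ii).

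The main obstacle is this last comparison: one must carry out the multiplication-formula expansions of $\llVert DF_n\rrVert_\HH^2$ and of $F_n^4$ carefully and verify that the contraction norms controlling $\mathrm{Var}(\llVert DF_n\rrVert_\HH^2)$ are exactly those entering the fourth cumulant, so that fourth-moment convergence forces every relevant contraction to vanish asymptotically; tracking the combinatorial coefficients is where all the bookkeeping sits. By contrast, once (\ref{eq:ibp-plan}) is available, the direction (ii)$\Rightarrow$(i) is essentially soft analysis.
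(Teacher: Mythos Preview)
The paper does not prove this theorem at all: it is stated with a reference to \cite{nualart-peccati,n-o} and used as a black box. So there is no ``paper's own proof'' to compare against.

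That said, your outline is the standard Nualart--Ortiz-Latorre / Nualart--Peccati argument and is correct in substance. The identity \eqref{eq:ibp-plan} is exactly $-\delta D F_n = qF_n$ combined with the duality \eqref{ipp} and the chain rule; the characteristic-function ODE argument for (ii)$\Rightarrow$(i) is precisely that of \cite{n-o}; and the (i)$\Rightarrow$(ii) direction via hypercontractivity, fourth-moment convergence, and the contraction bookkeeping is the content of \cite{nualart-peccati} as reorganised in \cite{n-p}. The one place where your sketch is genuinely compressed is the claim that $\mathrm{Var}(T_n)$ and the fourth cumulant are ``the same positive linear combination of the squared norms of the contractions $f_n\otimes_r f_n$'': the two expansions do not literally have the same coefficients, but each is a positive combination of the $\llVert f_n\otimes_r f_n\rrVert^2$ for $1\le r\le q-1$, so vanishing of one forces vanishing of the other. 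If you want to make this airtight you should either write out both product-formula expansions explicitly or simply invoke the equivalence (i)$\Leftrightarrow$(iii)$\Leftrightarrow$(iv) of the fourth-moment theorem as stated in \cite[Theorem~5.2.7]{n-p}.
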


To obtain Berry--Esseen-type estimate, we shall use the following
result from \cite[Corollary 5.2.10]{n-p}.
\begin{thm}\label{B-E-bound}
Let $\{F_n\}_{n \ge1}$ be a sequence of elements in the second Wiener
chaos such that $\E(F_n^2) \to\sigma^2$ and $\operatorname{Var} \Vert D F_n
\Vert^2_{\HH} \to0$ as $n \to\infty$. Then, $F_n
\stackrel{\text{law}}{\rightarrow} Z \sim\mathscr{N}(0,\sigma^2)$, and
\begin{equation*}
\begin{split} \sup_{x\in\R} \big\vert\P(F_n
< x) - \P(Z<x) \big\vert\leq\frac
{2}{\E(F_n^2)} \sqrt{ \operatorname{Var}\Vert D
F_n\Vert^2_{\HH}} +\frac{2
\vert\E(F_n^2) - \sigma^2 \vert}{\max\{\E(F_n^2), \sigma^2 \} }.
\end{split} %
\end{equation*}
\end{thm}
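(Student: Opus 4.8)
The plan is to prove Theorem~\ref{B-E-bound} by the \emph{Malliavin--Stein method}, i.e.\ by combining Stein's characterization of the normal law with the integration-by-parts formula~(\ref{ipp}). The first step records the algebraic input specific to the second Wiener chaos: every $F\in\mathcal{H}_2$ is centered, has moments of all orders (hypercontractivity on $\mathcal{H}_2$), and satisfies $\delta(DF)=2F$, i.e.\ $F$ is an eigenfunction of $\delta D$ with eigenvalue $2$ (see \cite{nualart}). Hence, for any Lipschitz $f$ (for which $f(F)\in\mathbb{D}^{1,2}$ with $Df(F)=f'(F)DF$), the relation $F=\tfrac12\delta(DF)$ together with the duality~(\ref{ipp}) gives
\[
\E\bigl(Ff(F)\bigr)=\tfrac12\,\E\bigl(f(F)\,\delta(DF)\bigr)=\tfrac12\,\E\bigl\langle f'(F)DF,DF\bigr\rangle_{\HH}=\E\Bigl(f'(F)\,\tfrac12\|DF\|^2_{\HH}\Bigr).
\]
Choosing $f(x)=x$ shows $\E\bigl(\tfrac12\|DF\|^2_{\HH}\bigr)=\E(F^2)$, so $\tfrac12\|DF\|^2_{\HH}$ has exactly the mean needed to convert the second moment appearing in the next step into a variance.

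The second step is Stein's argument. Fix $x\in\R$, set $c_n^2:=\E(F_n^2)$, and let $Z_n\sim\mathscr{N}(0,c_n^2)$. Let $f_{n,x}$ be the bounded solution of the Stein equation $c_n^2 f'(y)-yf(y)=\mathbf{1}_{(-\infty,x]}(y)-\P(Z_n\le x)$; the classical estimates for this solution (obtained after an approximation of the non-smooth indicator) bound $\sup_x\|f'_{n,x}\|_{\infty}$ by a universal constant over $c_n^2$. Substituting $F_n$ into the Stein equation, taking expectations, and using the identity of the first step with $f=f_{n,x}$,
\[
\P(F_n\le x)-\P(Z_n\le x)=\E\bigl(c_n^2 f'_{n,x}(F_n)-F_n f_{n,x}(F_n)\bigr)=\E\Bigl(f'_{n,x}(F_n)\bigl(c_n^2-\tfrac12\|DF_n\|^2_{\HH}\bigr)\Bigr).
\]
Bounding $|f'_{n,x}|$, applying the Cauchy--Schwarz inequality, and recalling $\E\bigl(\tfrac12\|DF_n\|^2_{\HH}\bigr)=c_n^2$ (so the relevant second moment equals $\tfrac14\operatorname{Var}\|DF_n\|^2_{\HH}$), one obtains, after carrying the constants from the Stein solution, $\sup_{x\in\R}\bigl|\P(F_n\le x)-\P(Z_n\le x)\bigr|\le \frac{2}{\E(F_n^2)}\sqrt{\operatorname{Var}\|DF_n\|^2_{\HH}}$, which is the first term of the asserted bound.

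The last step is to replace $Z_n\sim\mathscr{N}(0,c_n^2)$ by $Z\sim\mathscr{N}(0,\sigma^2)$: a standard one-dimensional estimate for the Kolmogorov distance between two centered Gaussians gives $\sup_{x\in\R}|\P(Z_n\le x)-\P(Z\le x)|\le \frac{2|\E(F_n^2)-\sigma^2|}{\max\{\E(F_n^2),\sigma^2\}}$, and the triangle inequality for the Kolmogorov distance combines the two displays into the claimed inequality. Since the hypotheses $\E(F_n^2)\to\sigma^2$ and $\operatorname{Var}\|DF_n\|^2_{\HH}\to0$ drive the right-hand side to $0$, the convergence $F_n\stackrel{\text{law}}{\rightarrow}Z$ follows (this conclusion is also exactly Theorem~\ref{t.2.2} with $q=2$, since those two hypotheses force $\|DF_n\|^2_{\HH}\to2\sigma^2$ in $L^2$). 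The step I expect to be the main obstacle is the Stein argument in the precise form needed for the Kolmogorov metric: justifying the integration by parts when $f_{n,x}$ is merely Lipschitz, with a jump of its derivative at $x$ (this uses that $F_n$ is atomless, so $\P(F_n=x)=0$), and tracking the exact dependence of $\|f'_{n,x}\|_{\infty}$ on $c_n^2$ so as to land on the stated constants; the chaos algebra of the first step and the Gaussian comparison of the last are routine.
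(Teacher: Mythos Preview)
The paper does not give its own proof of Theorem~\ref{B-E-bound}; it is quoted directly from \cite[Corollary~5.2.10]{n-p}. Your proposal reproduces precisely the Malliavin--Stein argument that underlies that corollary: the second-chaos identity $F=\tfrac12\,\delta(DF)$, substitution into the Stein equation for $\mathscr{N}(0,c_n^2)$, the bound on the Stein solution's derivative, and a final Kolmogorov comparison between two centered Gaussians. This is the standard route of the cited reference, so your approach is correct and matches what the paper relies on.
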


\subsection{Isonormal Gaussian process associated with two Gaussian
processes}\label{isonormal-2}
In this subsection, we briefly describe how two Gaussian processes can
be embedded into an isonormal Gaussian process. Let $X_1$ and $X_2$ be
two independent centered continuous Gaussian processes with
$X_1(0)=X_2(0)=0$ and continuous covariance functions $R_{X_1}$ and
$R_{X_2}$, respectively. Assume that $\HH_1$ and $\HH_2$ denote the
associated Hilbert spaces as explained in \xch{Section}{Subsection}~\ref{isonormal-1}.
The appropriate set $\tilde{\mathcal{E}}$ of elementary functions is
the set of the functions that can be written as $\varphi(t,i)=
\delta_{1i}\varphi_1(t) + \delta_{2i} \varphi_2(t)$ for $(t,i) \in
[0,T] \times\{1,2\}$, where $\varphi_1, \varphi_2 \in\mathcal{E}$,
and $\delta_{ij}$ is the Kronecker's delta. On the set
$\tilde{\mathcal{E}}$, we define the inner product\vspace{0pt}
\begin{equation}
\label{mixed-inner-1} %
\begin{split} \langle\varphi, \psi\rangle_{\tilde{\HH}}:
&= \bigl\langle\varphi(\cdot, 1), \psi(\cdot,1) \bigr\rangle_{\HH_1} +
\bigl\langle\varphi(\cdot, 2), \psi (\cdot,2) \bigr\rangle_{\HH_2}
\\
& = \int_{[0,T]^2} \varphi(s,1) \psi(t,1) \ud
R_{X_1}(s,t) + \int_{[0,T]^2} \varphi(s,2) \psi(t,2)
\ud R_{X_2}(s,t), \end{split} %
\end{equation}
where $\ud R_{X_i}(s,t) = R_{X_i}(\ud s,\ud t),\  i=1,2$.

Let $\HH$ denote the Hilbert space that is the completion of
$\tilde{\mathcal{E}}$ with respect to the inner product
\eqref{mixed-inner-1}. Notice that $\HH\cong\HH_1 \oplus\HH_2$,
where $\HH_1 \oplus\HH_2$ is the direct sum of the Hilbert spaces
$\HH_1$ and $\HH_2$, that is, it is a Hilbert space consisting of
elements of the form of ordered pairs $(h_1,h_2) \in\HH_1 \times
\HH_2$ equipped with the inner product
$\langle(h_1,h_2), (g_1,g_2)\rangle_{\HH_1 \oplus\HH_2} := \langle
h_1,g_1\rangle_{\HH_1} + \langle h_2,g_2\rangle_{\HH_2}$.

Now, for any $\varphi\in\tilde{\mathcal{E}}$, we define $X(\varphi):=
X_1(\varphi(\cdot,1)) + X_2(\varphi(\cdot,2))$. Using the independence
of $X_1$ and $X_2$, we infer that $\E ( X(\varphi) X(\psi)
 )= \langle\varphi_1, \psi\rangle_\HH$ for all $\varphi,\psi\in
\tilde{\mathcal{E}}$. Hence, the mapping $X$ can be extended to an
isometry on $\HH$, and therefore $\{X(h), \, h \in\HH\}$ defines an
isonormal Gaussian process associated to the Gaussian processes $X_1$
and $X_2$.

\subsection{Malliavin calculus with respect to (mixed
Brownian) fractional Brownian\\ motion}

The fractional Brownian motion $B^H= \{B_{t}^{H}\}_{t\in \mathbb{R}}$
with Hurst parameter $H\in(0,1)$ is a zero-mean Gaussian process with
covariance function
\begin{equation}
\E\bigl(B_{t}^{H}B_{s}^{H}
\bigr)=R_{H}(s,t)=\frac{1}{2} \bigl(|t|^{2H}+|s|^{2H}-|t-s|^{2H}
\bigr) \,. \label{cov}
\end{equation}
Let $\HH$ denote the Hilbert space associated to the covariance
function $R_H$; see \xch{Section}{Subsection}~\ref{isonormal-1}. It is well known that
for $H=\frac{1}2$, we have $\HH= L^2([0,T])$, whereas for $H>\frac{1}2$,
we have $L^2 ([0,T]) \subset L^{\frac{1}H} ([0,T]) \subset\vert\HH
\vert\subset\HH$, where $\vert\HH\vert$ is defined as the linear
space of measurable functions $\varphi$ on $[0,T]$ such that
\begin{equation*}
\Vert\varphi\Vert^{2}_{\vert\HH\vert}:= \alpha_H \int
_0^T\int_0^T
\big\vert\varphi(s) \big\vert\big\vert\varphi(t) \big\vert\vert t-s \vert^{2H-2} \ud s \ud
t < \infty,
\end{equation*}
where $\alpha_H=H(2H-1)$.

\begin{prop}[\cite{nualart}, Chapter 5] \label{isometries}
Let $\HH$ denote the Hilbert space associated to the covariance
function $R_H$ for $H \in(0,1)$. If $H=\frac{1}{2}$, that is, $B^H$ is
a~Brownian motion, then for any $\varphi, \psi\in\HH= L^2([0,T], \ud
t)$, the inner product of $\HH$ is given by the well-known It\^o
isometry
\begin{equation*}
\E \bigl(B^{\frac{1}{2}}(\varphi) B^{\frac{1}{2}}(\psi) \bigr) = \langle
\varphi, \psi\rangle_{\HH}= \int_0^T
\varphi(t) \psi(t) \ud t.
\end{equation*}
If $H>\frac{1}{2}$, then for any $\varphi, \psi\in\vert\HH\vert$,
we have
\begin{equation}
\E \bigl(B^H(\varphi) B^H(\psi) \bigr) = \langle\varphi,
\psi\rangle _{\HH}= \alpha_H\int_0^T
\int_0^T \varphi(s) \psi(t) |t-s|^{2H-2}
\ud s \ud t.
\end{equation}
\end{prop}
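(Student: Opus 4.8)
The plan is to establish both formulas first on the dense subspace $\mathcal{E}$ of step functions, where each reduces to a short computation, and then to extend them to $L^2([0,T],\ud t)$ (for $H=\frac{1}{2}$) and to $\vert\HH\vert$ (for $H>\frac{1}{2}$) by density and continuity.

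For the Brownian case $H=\frac{1}{2}$, the covariance is $R_{1/2}(s,t)=s\wedge t=\int_0^T\mathbf{1}_{[0,s]}(u)\,\mathbf{1}_{[0,t]}(u)\,\ud u$, so the bilinear forms $\langle\cdot,\cdot\rangle_{\HH}$ and $\langle\cdot,\cdot\rangle_{L^2([0,T])}$ agree on the generators $\mathbf{1}_{[0,t]}$, hence on all of $\mathcal{E}$ by bilinearity. Since $\mathcal{E}$ is dense in $\HH$ by construction and dense in $L^2([0,T],\ud t)$ by a standard approximation argument, the identity on $\mathcal{E}$ extends to an isometric isomorphism $\HH\cong L^2([0,T],\ud t)$, which is precisely the claimed It\^o isometry.

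For $H>\frac{1}{2}$, the computational core is the identity
\[
\frac{\partial^2}{\partial s\,\partial t}R_H(s,t)=\alpha_H\,\vert t-s\vert^{2H-2},\qquad s\neq t,\quad\alpha_H=H(2H-1),
\]
obtained by differentiating \eqref{cov}, combined with the boundary values $R_H(s,0)=R_H(0,t)=0$. Integrating twice (the singularity at $u=v$ is integrable since $2H-2>-1$) gives
\[
\langle\mathbf{1}_{[0,s]},\mathbf{1}_{[0,t]}\rangle_{\HH}=R_H(s,t)=\alpha_H\int_0^T\!\!\int_0^T\mathbf{1}_{[0,s]}(u)\,\mathbf{1}_{[0,t]}(v)\,\vert u-v\vert^{2H-2}\,\ud u\,\ud v,
\]
so by bilinearity the form $(\varphi,\psi)\mapsto\alpha_H\int_0^T\!\int_0^T\varphi(u)\psi(v)\vert u-v\vert^{2H-2}\,\ud u\,\ud v$ coincides with $\langle\cdot,\cdot\rangle_{\HH}$ on $\mathcal{E}$.

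To reach all of $\vert\HH\vert$, I would use that on $\mathcal{E}$ the identity just proved yields $\Vert\varphi\Vert_{\HH}^2=\alpha_H\int_0^T\!\int_0^T\varphi(u)\varphi(v)\vert u-v\vert^{2H-2}\,\ud u\,\ud v\le\Vert\varphi\Vert_{\vert\HH\vert}^2$, since the kernel is nonnegative; hence the inclusion $\mathcal{E}\hookrightarrow\HH$ is continuous for the stronger norm $\Vert\cdot\Vert_{\vert\HH\vert}$. As step functions are dense in $\vert\HH\vert$ with respect to $\Vert\cdot\Vert_{\vert\HH\vert}$, the inclusion extends to $\vert\HH\vert\subset\HH$ and the double-integral expression for the inner product passes to the limit. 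The one genuinely nonroutine ingredient is this density of $\mathcal{E}$ in $\vert\HH\vert$, together with the attendant fact that $\langle\varphi,\varphi\rangle$ is finite and nonnegative for $\varphi\in\vert\HH\vert$ (so that the limit really is the $\HH$-inner product rather than merely a formal extension); both are standard structural facts about the fractional Brownian space and may be quoted from \cite{nualart, p-t}, whereas the mixed-derivative identity displayed above is the only actual calculation and is routine.
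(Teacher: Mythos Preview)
Your argument is correct and follows the standard route found in \cite{nualart}: verify the identity on the generators $\mathbf{1}_{[0,t]}$ via the mixed partial derivative $\partial^2 R_H/\partial s\,\partial t=\alpha_H|t-s|^{2H-2}$, extend to step functions by bilinearity, and pass to $|\HH|$ by density. The paper itself does not supply a proof of this proposition; it is quoted as background from \cite[Chapter~5]{nualart}, so there is nothing to compare against beyond noting that your sketch reproduces the standard derivation from that reference.
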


The following proposition establishes the link between pathwise
integral and Skorokhod integral in Malliavin calculus associated to
fractional Brownian motion and will play an important role in our
analysis.

\begin{prop}[\cite{nualart}]\label{prop:div-pathwise}
Let $u=\{u_t\}_{t \in[0,T]}$ be a stochastic process in the space\break
$\mathbb{D}^{1,2}(\vert\HH\vert)$ such that almost surely
\begin{equation*}
\int_0^T\int_0^T
\vert D_s u_t \vert |t-s|^{2H-2}\ud s \ud t<
\infty.
\end{equation*}
Then $u$ is pathwise integrable, and we have
\begin{equation*}
\int_0^T u_t \ud
B^H_t = \int_0^T
u_t \delta B^H_t + \alpha_H
\int_0^T\int_0^T
D_s u_t |t-s|^{2H-2}\ud s \ud t.
\end{equation*}
\end{prop}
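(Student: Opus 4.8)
The plan is to establish the identity first for elementary processes, where the pathwise integral is literally a Riemann--Stieltjes sum and the computation is transparent, and then to transfer it to a general $u$ by an approximation argument in which the two standing hypotheses control, respectively, the Skorokhod integral and the ``trace'' term.

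\emph{Elementary processes.} Let $u_t=\sum_{i=0}^{n-1}F_i\mathbf{1}_{(t_i,t_{i+1}]}(t)$ for a partition $0=t_0<\cdots<t_n=T$ and smooth cylindrical $F_i$. For such $u$ the pathwise integral is by definition $\int_0^T u_t\,\ud B^H_t=\sum_i F_i\bigl(B^H_{t_{i+1}}-B^H_{t_i}\bigr)$. On the other hand, the duality relation \eqref{ipp} yields the product rule $\delta(F h)=F\,\delta(h)-\langle DF,h\rangle_\HH$ for $F$ smooth cylindrical and $h\in\HH$; taking $h=\mathbf{1}_{(t_i,t_{i+1}]}$, so that $\delta(h)=B^H_{t_{i+1}}-B^H_{t_i}$, and summing in $i$ we obtain
\[
\int_0^T u_t\,\delta B^H_t=\sum_i F_i\bigl(B^H_{t_{i+1}}-B^H_{t_i}\bigr)-\sum_i\bigl\langle DF_i,\mathbf{1}_{(t_i,t_{i+1}]}\bigr\rangle_\HH .
\]
Since $D_s u_t=\sum_i (D_s F_i)\,\mathbf{1}_{(t_i,t_{i+1}]}(t)$, Proposition~\ref{isometries} identifies $\langle DF_i,\mathbf{1}_{(t_i,t_{i+1}]}\rangle_\HH$ with $\alpha_H\int_0^T\!\int_{t_i}^{t_{i+1}}D_s F_i\,|t-s|^{2H-2}\,\ud t\,\ud s$, and summing gives $\alpha_H\int_0^T\!\int_0^T D_s u_t\,|t-s|^{2H-2}\,\ud s\,\ud t$. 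This is the asserted formula for step processes.

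\emph{Approximation.} For general $u\in\mathbb{D}^{1,2}(|\HH|)$ satisfying the integrability hypothesis, pick elementary processes $u^n$ (obtained, say, by a standard smoothing/discretization procedure) with $u^n\to u$ in $\mathbb{D}^{1,2}(|\HH|)$ and with $\int_0^T\!\int_0^T|D_s u^n_t-D_s u_t|\,|t-s|^{2H-2}\,\ud s\,\ud t\to 0$. Since $|\HH|\hookrightarrow\HH$ continuously, $u^n\to u$ also in $\mathbb{D}^{1,2}(\HH)\subset\mathrm{Dom}(\delta)$, and the basic estimate $\E[\delta(v)^2]\le\E\|v\|_\HH^2+\E\|Dv\|_{\HH\otimes\HH}^2$ together with the closedness of $\delta$ gives $\delta(u^n)\to\delta(u)$ in $L^2(\varOmega)$. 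Simultaneously the trace terms converge to $\alpha_H\int_0^T\!\int_0^T D_s u_t\,|t-s|^{2H-2}\,\ud s\,\ud t$ by construction. Hence the Riemann--Stieltjes sums associated with $u^n$ converge; this shows that $u$ is pathwise integrable in the relevant sense and that its integral equals $\delta(u)$ plus the trace term, which is the claim.

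\emph{Main obstacle.} The delicate point is the approximation step: one must produce a single sequence $u^n$ that converges to $u$ simultaneously in the Hilbert-space norm of $\mathbb{D}^{1,2}(|\HH|)$ and in the weighted $L^1$ quantity $\int\!\int|\,\cdot\,|\,|t-s|^{2H-2}\,\ud s\,\ud t$, and one must verify that the notion of pathwise integral for which the identity is stated (the forward/Russo--Vallois regularization, equivalently the nested Riemann--Stieltjes limit of Theorem~\ref{t:f-ito}) is compatible with the $L^2$-limit of the $\delta(u^n)$. Once both convergences are in hand, the rest is the elementary product-rule computation above.
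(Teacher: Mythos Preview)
The paper does not prove this proposition; it is quoted from Nualart's monograph (the bracket \texttt{[nualart]} in the statement header is a citation, not a label), so there is no in-paper argument to compare against. Your outline is in the spirit of the standard proof there: verify the identity on simple integrands via the commutation rule $\delta(Fh)=F\,\delta(h)-\langle DF,h\rangle_\HH$, then pass to the limit.

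That said, your approximation step has a real gap, and you have in fact already put your finger on it in the ``main obstacle'' paragraph without resolving it. From $u^n\to u$ in $\mathbb{D}^{1,2}(|\HH|)$ you correctly obtain $\delta(u^n)\to\delta(u)$ in $L^2(\varOmega)$, and the trace terms converge by hypothesis. This tells you that $\int_0^T u^n_t\,\ud B^H_t$ converges (in $L^2$) to $\delta(u)+\text{trace}(u)$. It does \emph{not} tell you that the pathwise integral $\int_0^T u_t\,\ud B^H_t$ exists, nor that it equals $\lim_n \int_0^T u^n_t\,\ud B^H_t$. The pathwise integral is defined intrinsically as a limit of Riemann--Stieltjes sums (or Russo--Vallois regularizations) of $u$ itself, not as a limit along approximating processes; you are implicitly assuming a continuity property of the pathwise integral that is precisely the nontrivial content of the proposition. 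In Nualart's treatment this circularity is avoided by working directly with the regularization $\varepsilon^{-1}\!\int_0^T u_t(B^H_{t+\varepsilon}-B^H_t)\,\ud t$, expressing it as a Skorokhod integral plus a correction, and showing that \emph{this} object converges as $\varepsilon\downarrow 0$; the existence of the pathwise integral is then a conclusion, not an assumption. To repair your argument you would need either that route, or an independent proof that the Riemann--Stieltjes sums for $u$ converge and that the map $u\mapsto\int_0^T u_t\,\ud B^H_t$ is continuous for the topology in which $u^n\to u$.
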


For further use, we also need the following ancillary facts related to
the isonormal Gaussian process derived from the covariance function of
the mixed Brownian--fractional Brownian motion. Assume that $X=W+B^H$
stands for a mixed Brownian--fractional Brownian motion with
$H>\frac{1}{2}$. We denote by $\HH$ the Hilbert space associated to the
covariance function of the process $X$ with inner product $\langle
\cdot, \cdot\rangle_{\HH}$. Then a direct application of relation
(\ref{mixed-inner-1}) and Proposition~\ref{isometries} yields the
following facts. We recall that in what follows the notations $I^X_1$
and $I^X_2$ stand for multiple Wiener integrals of orders $1$ and $2$
with respect to isonormal Gaussian process $X$; see \xch{Section}{Subsection}~\ref{isonormal-1}.

\begin{lemma}
\label{lma:inner_product} For any $\varphi_1, \varphi_2, \psi_1, \psi_2
\in L^2([0,T])$, we have
\begin{equation*}
\begin{split} \E \bigl( I_1^X(\varphi)
I_1^X(\psi) \bigr) &= \langle\varphi,\psi
\rangle_{\HH}
\\
&= \int_0^T \varphi(t) \psi(t) \ud t +
\alpha_H \int_0^T\int
_0^T \varphi(s)\psi(t)|t-s|^{2H-2}\ud s
\ud t. \end{split} %
\end{equation*}
Moreover,
\begin{eqnarray*}
&&\E \bigl( I_2^X(\varphi_1 \otimes\varphi_2) I_2^X(\psi_1 \otimes\psi _2) \bigr) \\
&& \quad = 2 \langle\varphi_1 \otimes\varphi_2,\psi_1 \otimes\psi_2\rangle_{\HH^{\otimes2}}\\
&& \quad = \int_{[0,T]^2}\varphi_1(s_1) \psi_1(s_1)\varphi_2(s_2) \psi_2(s_2) \ud s_1 \ud s_2\\
&& \qquad +\, \alpha_H \int_{[0,T]^3} \varphi_1(s_1)\psi_1(s_1) \varphi_2(s_2)\psi_2(t_2) |t_2-s_2|^{2H-2}\ud s_1 \ud s_2 \ud t_2\\
&& \qquad +\, \alpha_H \int_{[0,T]^3} \varphi_1(s_1)\psi_1(t_1) \varphi_2(s_1)\psi_2(s_1) |t_1-s_1|^{2H-2}\ud s_1 \ud t_1 \ud s_1\\
&& \qquad +\, \alpha^2_H \int_{[0,T]^4}\varphi_1(s_1) \psi_1(t_1)\varphi_2(s_2) \psi_2(t_2)\\
&& \qquad  \, \times\vert t_1 - s_1\vert^{2H-2} \vert t_2-s_2 \vert^{2H-2}\ud s_1 \ud t_1 \ud s_2 \ud t_2.
\end{eqnarray*}
\end{lemma}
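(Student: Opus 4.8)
The plan is to reduce both identities to the inner-product structure of $\HH$ together with the isometry property of the multiple Wiener--It\^o integrals, and then to pass from step functions to general $L^2$ integrands by a density argument.

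For the first identity, I would start from the observation that, since $X=W+B^H$ with $W$ and $B^H$ independent, the covariance function of $X$ splits as $R_X(s,t)=R_W(s,t)+R_H(s,t)$. Hence, on the dense subspace $\mathcal{E}$ of step functions, the inner product of $\HH$ is the sum of the inner product of the Brownian Hilbert space, which is $L^2([0,T])$, and of the fractional Brownian Hilbert space; this is precisely relation \eqref{mixed-inner-1} applied to the pair $(W,B^H)$ with both coordinate functions equal to the given step function. Applying the It\^o isometry and the $H>\frac12$ case of Proposition~\ref{isometries} to the two summands yields, for $\varphi,\psi\in\mathcal{E}$, the asserted expression for $\langle\varphi,\psi\rangle_\HH$ (a Lebesgue term plus an $\alpha_H$-weighted double integral), while $\E(I_1^X(\varphi)I_1^X(\psi))=\langle\varphi,\psi\rangle_\HH$ is just the definition of the isonormal process attached to $X$. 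To reach arbitrary $\varphi,\psi\in L^2([0,T])$, I would invoke the continuous embedding $L^2([0,T])\subset|\HH|\subset\HH$ valid for $H>\frac12$ (recorded before Proposition~\ref{isometries}), which makes both bilinear forms on the right-hand side continuous on $L^2([0,T])\times L^2([0,T])$; approximating $\varphi$ and $\psi$ by step functions and letting the approximations converge then gives the general case.

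For the second identity, I would use the isometry between the symmetric tensor product $\HH^{\odot2}$, equipped with the norm $\sqrt{2!}\,\|\cdot\|_{\HH^{\otimes2}}$, and the second Wiener chaos, so that $\E(I_2^X(f)I_2^X(g))=2\langle\widetilde f,\widetilde g\rangle_{\HH^{\otimes2}}$ for $f=\varphi_1\otimes\varphi_2$ and $g=\psi_1\otimes\psi_2$, where $\widetilde f$ denotes symmetrization. Since the tensor inner product factorizes as $\langle a\otimes b,c\otimes d\rangle_{\HH^{\otimes2}}=\langle a,c\rangle_\HH\langle b,d\rangle_\HH$, I would substitute the first identity into each factor $\langle\varphi_i,\psi_j\rangle_\HH$ and expand the resulting products of two-term sums: each such product contributes four integral terms of exactly the displayed types, namely the pure Lebesgue double integral, the two mixed integrals carrying a single factor $\alpha_H|t-s|^{2H-2}$, and the doubly fractional integral carrying $\alpha_H^2|t_1-s_1|^{2H-2}|t_2-s_2|^{2H-2}$. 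The same density argument as before extends the formula from step functions to $L^2([0,T])$, the relevant multilinear forms being continuous by the embedding $L^2([0,T])\subset|\HH|$.

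The only genuinely delicate point is this last passage from step functions to general $L^2$ integrands: it rests on knowing that the singular kernel $\alpha_H|t-s|^{2H-2}$ defines a bounded bilinear, and in the tensor case multilinear, form on $L^2([0,T])$, which is exactly the content of the inclusion $L^2([0,T])\subset|\HH|$ for $H\in(\frac12,1)$ (alternatively, a consequence of the Hardy--Littlewood--Sobolev inequality). Granted this, the rest is a routine expansion; some care with the symmetrization is needed for the exact combinatorial prefactor, but it does not affect the structure of the four integral terms.
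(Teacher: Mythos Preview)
Your proposal is correct and follows essentially the same approach as the paper, which in fact does not give a separate proof of this lemma but states it as ``a direct application of relation \eqref{mixed-inner-1} and Proposition~\ref{isometries}''; your argument spells out exactly that application, together with the density/extension step via $L^2([0,T])\subset|\HH|$ that the paper leaves implicit. Your remark about the symmetrization and the combinatorial prefactor is well taken and is the only place where a slip could occur, but it does not affect the approach.
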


\section{Main results}\label{beef}

Throughout this section, we assume that $X=W+B^H$ is a mixed
Brownian--fractional Brownian motion with $H>\frac{1}{2}$, unless
otherwise stated. We denote by $\HH$ the Hilbert space associated to
process $X$ with inner product $\langle\cdot, \cdot\rangle_{\HH}$.

\subsection{Central limit theorem}

We start with the following fact, which is one of our key ingredients.

\begin{lemma}[\cite{azm-val}]\label{l:mixed-f}
Let $\E\xi^2 < \infty$. Then the randomized periodogram of the mixed
Brownian--fractional Brownian motion $X$ given by
\textup{(\ref{eq:r-p-m-b-fb})} satisfies
\begin{equation}
\label{rp-path} \E_{\xi} I_{T}(X;L \, \xi)=[X,X]_{T}+2
\int_{0}^{T} \int_{0}^{t}
\varphi _{\xi}\bigl(L(t-s)\bigr) \ud X_{s} \ud
X_{t},
\end{equation}
where $\varphi_{\xi}$ is the characteristic function of $\xi$, and the
iterated stochastic integral in the right-hand side is understood
pathwise, that is, as the limit of the Riemann--Stieltjes sums; see
\xch{Section}{Subsection}~\ref{path-follmer}.
\end{lemma}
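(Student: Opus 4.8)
The plan is to start from the definition of the randomized periodogram and the already-established identity \eqref{eq:perio} (the It\^o-formula representation of the deterministic periodogram), and then integrate against the density $g_\xi$. First I would recall that for a fixed deterministic $\lambda$, the pathwise It\^o formula of Theorem~\ref{t:f-ito} applied to $F(x)=x^2$ and the process $e^{i\lambda\cdot}X_\cdot$ (or, as in \eqref{eq:perio}, a direct application of the product/It\^o formula to the complex exponential) gives
\[
I_T(X;\lambda) = 2\,\textbf{Re}\int_0^T\!\!\int_0^t e^{i\lambda(t-s)}\,\ud X_s\,\ud X_t + [X,X]_T,
\]
where the double integral is a genuine pathwise Riemann--Stieltjes object because $X=W+B^H$ has continuous quadratic variation equal to $t$ (the Remark after Theorem~\ref{t:main}), so Theorem~\ref{t:f-ito} applies. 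Taking $\lambda=Lx$, multiplying by $g_\xi(x)$, and integrating over $x\in\R$ yields
\[
\E_\xi I_T(X;L\xi) = [X,X]_T + 2\,\textbf{Re}\int_\R\!\bigg(\int_0^T\!\!\int_0^t e^{iLx(t-s)}\,\ud X_s\,\ud X_t\bigg) g_\xi(x)\,\ud x,
\]
using that $\int_\R g_\xi(x)\,\ud x=1$ for the bracket term.

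The substantive step is to interchange the $\ud x$-integration with the pathwise double stochastic integral, so that
\[
\int_\R\bigg(\int_0^T\!\!\int_0^t e^{iLx(t-s)}\,\ud X_s\,\ud X_t\bigg)g_\xi(x)\,\ud x = \int_0^T\!\!\int_0^t\bigg(\int_\R e^{iLx(t-s)}g_\xi(x)\,\ud x\bigg)\ud X_s\,\ud X_t = \int_0^T\!\!\int_0^t \varphi_\xi\bigl(L(t-s)\bigr)\,\ud X_s\,\ud X_t,
\]
where the inner integral is exactly $\varphi_\xi(L(t-s))$ by definition of the characteristic function; note $\varphi_\xi$ is real because $\xi$ is symmetric, which also disposes of the $\textbf{Re}$. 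To justify the interchange I would approximate the pathwise double integral by its defining Riemann--Stieltjes sums $\sum_{t_k\in\pi}\sum_{t_j\le t_k} e^{iLx(t_k-t_j)}(X_{t_j}-X_{t_{j-1}})(X_{t_k}-X_{t_{k-1}})$: for each fixed mesh the sum is finite, so Fubini for the finite sum against $g_\xi$ is trivial, and then one passes to the limit $|\pi|\to0$ on both sides. The convergence on the ``$x$-integrated'' side follows from dominated convergence once one bounds the Riemann--Stieltjes sums uniformly in the mesh; here one uses that $|e^{iLx(t-s)}|\le 1$ and that the sums $\sum_{t_k}\sum_{t_j}|X_{t_j}-X_{t_{j-1}}|\,|X_{t_k}-X_{t_{k-1}}|$ are controlled by the (finite, a.s.\ convergent) total variation sums of $X$ along the nested partitions, together with continuity of the integrand in $x$ to get the pointwise limit $\varphi_\xi(L(t-s))$ of the resulting Riemann--Stieltjes sums. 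Since $\varphi_\xi$ is bounded and Lipschitz (it is a characteristic function of a random variable with finite second moment, hence $C^2$), the limiting double integral $\int_0^T\int_0^t\varphi_\xi(L(t-s))\,\ud X_s\,\ud X_t$ again exists as a pathwise Riemann--Stieltjes integral via Theorem~\ref{t:f-ito} (applied to a suitable $C^2$ primitive, or by the Young-type integration available since $X$ has finite quadratic variation).

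The main obstacle I expect is precisely this Fubini-type interchange between the Lebesgue integral $\int_\R\cdots g_\xi(x)\,\ud x$ and the pathwise (non-absolutely-convergent, Riemann--Stieltjes) iterated integral: there is no underlying $\sigma$-finite product measure making this automatic, so one genuinely has to argue at the level of approximating sums and uniform bounds. Everything else—reality of $\varphi_\xi$, the value of the bracket $[X,X]_T=T$, and the identification of the inner $x$-integral with $\varphi_\xi(L(t-s))$—is immediate. Once the interchange is justified, \eqref{rp-path} follows, noting that the assumption $\E\xi^2<\infty$ is what guarantees $\varphi_\xi\in C^2$ and hence that the right-hand-side pathwise integral is well defined.
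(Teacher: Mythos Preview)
The paper does not prove this lemma; it is quoted from \cite{azm-val}. Your overall strategy---start from the It\^o representation \eqref{eq:perio}, integrate against $g_\xi$, and swap the $\ud x$-integral with the pathwise double integral---is the natural one and is essentially how the result is obtained in \cite{azm-val}.

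However, your justification of the interchange contains a genuine gap. You propose to dominate the Riemann--Stieltjes sums by
\[
\sum_{t_k}\sum_{t_j}\big|X_{t_j}-X_{t_{j-1}}\big|\,\big|X_{t_k}-X_{t_{k-1}}\big|
=\Bigl(\sum_{k}\big|X_{t_k}-X_{t_{k-1}}\big|\Bigr)^{2},
\]
and then claim these ``total variation sums'' are controlled along the nested partitions. They are not: $X=W+B^H$ has a.s.\ infinite variation on every interval (already the Brownian part does), so this bound blows up as $|\pi|\to0$ and dominated convergence is unavailable. The same objection applies to your remark about ``Young-type integration'': Young integration needs complementary H\"older regularity summing to more than $1$, which fails here because of the Brownian component.

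A clean way around this is to use, \emph{before} integrating in $x$, the integration-by-parts representation displayed just after \eqref{eq:perio},
\[
I_T(X;\lambda)=X_T^2+X_T\!\int_0^T\! i\lambda\bigl(e^{i\lambda(T-t)}-e^{-i\lambda(T-t)}\bigr)X_t\,\ud t+\lambda^2\Bigl|\int_0^T e^{i\lambda t}X_t\,\ud t\Bigr|^2,
\]
which expresses the periodogram entirely through ordinary Lebesgue integrals of the continuous path $t\mapsto X_t$. Fubini for $\int_\R(\cdots)g_\xi(x)\,\ud x$ is then routine; the hypothesis $\E\xi^2<\infty$ is exactly what makes the $\lambda^2$-term integrable against $g_\xi$. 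After swapping, one undoes the integration by parts (equivalently, reapplies the pathwise It\^o formula with the smooth kernel $\varphi_\xi(L(t-s))$) to recover the iterated pathwise integral on the right of \eqref{rp-path}. Your observation that symmetry of $\xi$ makes $\varphi_\xi$ real, eliminating the $\textbf{Re}$, is correct and is used at this last step.
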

Our first aim is to transform the pathwise integral in
(\ref{rp-path}) into the Skorokhod integral. This is the topic of the
next lemma.
%
\begin{lemma}\label{cor:relation}
Let $u_t = \int_{0}^{t}\varphi_\xi(L(t-s)) \ud X_{s}$, where
$\varphi_\xi$ denotes the characteristic function of a symmetric random
variable $\xi$. Then $u\in\mathrm{Dom}(\delta)$, and
\[
\int_0^T u_t \ud X_{t}
= \int_0^T u_t \delta
X_t + \alpha_H \int_0^T
\int_0^T D^{(B^H)}_s
u_t |t-s|^{2H-2}\ud s\ud t,
\]
where the stochastic integral in the right-hand side is the Skorokhod
integral with respect to mixed Brownian--fractional Brownian motion
$X$, and $D^{(B^H)}$ denotes the Malliavin derivative operator with
respect to the fractional Brownian motion $B^H$.
\end{lemma}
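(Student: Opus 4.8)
The strategy is to apply Proposition~\ref{prop:div-pathwise} to the process $u_t = \int_0^t \varphi_\xi(L(t-s))\,\ud X_s$. To do so I must verify two things: first, that $u$ belongs to the space $\mathbb{D}^{1,2}(\vert\HH\vert)$ of the \emph{fractional} component, and second, that the pathwise integrability condition $\int_0^T\int_0^T \vert D_s^{(B^H)} u_t\vert\,\vert t-s\vert^{2H-2}\,\ud s\,\ud t < \infty$ holds almost surely. Once these are in place, Proposition~\ref{prop:div-pathwise} gives exactly the claimed decomposition, with the correction term $\alpha_H\int_0^T\int_0^T D_s^{(B^H)} u_t\,\vert t-s\vert^{2H-2}\,\ud s\,\ud t$; the left-hand side pathwise integral is known to exist by Lemma~\ref{l:mixed-f} (and the F\"ollmer theory of Section~\ref{path-follmer}), so the Skorokhod integral on the right is automatically well defined, i.e.\ $u\in\mathrm{Dom}(\delta)$.

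\medskip

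First I would compute $u_t$ and its Malliavin derivative explicitly. Writing $X = W + B^H$ and using linearity, $u_t = \int_0^t \varphi_\xi(L(t-s))\,\ud W_s + \int_0^t \varphi_\xi(L(t-s))\,\ud B^H_s$; since the integrand $s\mapsto \varphi_\xi(L(t-s))$ is deterministic, $u_t$ is a Gaussian random variable lying in the first chaos, namely $u_t = I_1^X\big(\mathbf{1}_{[0,t]}(\cdot)\,\varphi_\xi(L(t-\cdot))\big)$. Its derivative with respect to $B^H$ is therefore the deterministic $\HH$-element $D_s^{(B^H)} u_t = \varphi_\xi(L(t-s))\,\mathbf{1}_{[0,t]}(s)$. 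This makes the membership $u\in\mathbb{D}^{1,2}(\vert\HH\vert)$ and the a.s.\ integrability condition into purely analytic estimates on the deterministic kernel $k_L(t,s) := \varphi_\xi(L(t-s))\mathbf{1}_{[0,t]}(s)$: since $\varphi_\xi$ is a characteristic function, it is bounded by $1$, so $\vert D_s^{(B^H)} u_t\vert \le 1$ on $[0,T]^2$, and $\int_0^T\int_0^T \vert t-s\vert^{2H-2}\,\ud s\,\ud t < \infty$ for $H>1/2$. The $\mathbb{D}^{1,2}(\vert\HH\vert)$-norm bound is similarly reduced to finiteness of $\E\Vert u\Vert_{\vert\HH\vert}^2$ plus $\E\Vert Du\Vert_{\vert\HH\vert\otimes\vert\HH\vert}^2$, both of which follow from boundedness of $\varphi_\xi$ together with the same integrability of $\vert t-s\vert^{2H-2}$; here one also uses that $\varphi_\xi$ is continuous (hence the kernel is in $L^2$) and real-valued because $\xi$ is symmetric.

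\medskip

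The main subtlety I expect is bookkeeping about \emph{which} Hilbert space and \emph{which} derivative operator one works with: $X$ lives in the big space $\HH \cong \HH_W \oplus \HH_{B^H}$ of Section~\ref{isonormal-2}, but Proposition~\ref{prop:div-pathwise} is stated for the fractional Brownian motion alone. The clean way around this is the independence of $W$ and $B^H$: one conditions on (or freezes) the Brownian path and applies Proposition~\ref{prop:div-pathwise} in the $B^H$-variables only, noting that the Skorokhod integral $\delta(u)$ with respect to $X$ restricted to integrands adapted-in-$W$ but anticipating-in-$B^H$ decomposes accordingly, and that the divergence and the pathwise Riemann--Stieltjes limit are both linear in the two independent coordinates. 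Thus the $W$-part of $\int_0^T u_t\,\ud X_t$ contributes no correction term (It\^o $=$ Skorokhod there, as $u$ is not $W$-adapted but the bracket $[W,B^H]=0$ kills the cross term; more carefully, the correction term involves $D^{(B^H)}$ only), while the $B^H$-part produces precisely $\alpha_H\int_0^T\int_0^T D_s^{(B^H)} u_t\,\vert t-s\vert^{2H-2}\,\ud s\,\ud t$. Making this splitting rigorous — in particular justifying that $u\in\mathrm{Dom}(\delta)$ for the full $X$ and that the pathwise integral equals the sum of the two pathwise integrals against $W$ and against $B^H$ — is the one place where care is needed; everything else is a direct substitution into Proposition~\ref{prop:div-pathwise}.
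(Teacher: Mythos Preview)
Your proposal is correct and follows essentially the same route as the paper: both arguments split $X=W+B^H$, use independence of the two components to reduce to Proposition~\ref{prop:div-pathwise} on the $B^H$-part, and observe that the remaining pieces produce no correction term because the relevant Malliavin derivatives vanish. The paper organizes this a bit more concretely than your ``freeze the Brownian path'' framing: it writes $u_t=u^W_t+u^{B^H}_t$ and breaks $\int_0^T u_t\,\ud X_t$ into the four cross-terms $\int u^W\ud W$, $\int u^{B^H}\ud W$, $\int u^W\ud B^H$, $\int u^{B^H}\ud B^H$, disposing of the first three via adaptedness or independence (so pathwise $=$ Skorokhod there) and applying Proposition~\ref{prop:div-pathwise} only to the last one; it then reassembles $\int u\,\delta W+\int u\,\delta B^H=\int u\,\delta X$ by the duality definition~\eqref{ipp}. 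One small slip in your write-up: $u$ \emph{is} adapted to the joint filtration $\F^X$, so the reason $\int u^{B^H}\ud W$ needs no correction is not lack of adaptedness but rather that $D^{(W)}u^{B^H}=0$ by independence; your parenthetical ``more carefully, the correction term involves $D^{(B^H)}$ only'' is the right fix.
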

\begin{proof}
First, note that
\[
u_t= u^W_t + u^{B^H}_t=
\int_{0}^{t}\varphi_\xi\bigl(L(t-s)
\bigr) \ud W_{s} + \int_{0}^{t}
\varphi_\xi\bigl(L(t-s)\bigr) \ud B^H_{s}.
\]
Moreover, $\E(
\int_{0}^{T} u_t^2 \ud t ) < \infty$, so that $u_t \in
\mathbb{D}^{1,2}$ for almost all $t \in[0,T]$ and $\E(\int_{[0,T]^2}
(D_s u_t)^2 \ud s \ud t) < \infty$. Hence, $u \in\mathrm{Dom}(\delta)$
by \cite[Proposition 1.3.1]{nualart}. On the other hand,
\begin{equation*}
\begin{split}
\int_0^T u_t \ud X_t &= \int_0^T u_t \ud W_t + \int_0^T u_t \ud B^H_t\\
& = \int_0^T u^W_t\ud W_t + \int_0^T u^{B^H}_t \ud W_t + \int_0^T u^{W}_t \ud B^H_t + \int_0^T u^{B^H}_t \ud B^H_t\\
&= \int_0^T u^W_t\delta W_t + \int_0^T u^{B^H}_t \delta W_t + \int_0^T u^W_t \delta B^H_t + \int_0^T u^{B^H}_t \delta B^H_t\\
&\quad  + \alpha_H \int_0^T\int_0^T D^{(B^H)}_s u^{B^H}_t |t-s|^{2H-2}\ud s\ud t\\
&= \int_{0}^{T} u_t \delta W_t + \int_{0}^{T} u_t\delta B^H_t + \alpha _H \int_0^T\int_0^T D^{(B^H)}_s u_t |t-s|^{2H-2}\ud s\ud t,
\end{split} %
\end{equation*}
where we have used the independence of $W$ and $B^H$, Proposition \ref
{prop:div-pathwise}, and the fact that for adapted integrands, the
Skorokhod integral coincides with the It\^o integral. To finish the
proof, we use the very definition of Skorokhod integral and relation
(\ref{ipp}) to obtain that $\int_{0}^{T} u_t \delta W_t + \int_{0}^{T}
u_t \delta B^H_t = \int_{0}^{T} u_t \delta X_t$.
\end{proof}

We will also pose the following assumption for characteristic function
$\varphi_\xi$ of a symmetric random variable $\xi$.
\begin{Assumption}\label{assumption:main}
The characteristic function $\varphi_\xi$ satisfies
\[
\int_0^\infty\big|\varphi_\xi(x)\big|\ud x <
\infty.
\]
\end{Assumption}

\begin{remark}{\rm

Note that Assumption \ref{assumption:main} is satisfied for many
distributions. Especially, if the characteristic function $\varphi_\xi$
is positive and the density function $g_\xi(x)$ is differentiable, then
we get by applying Fubini's theorem and integration by part that
\[
\int_0^\infty\varphi_\xi(x)\ud x = 2
\int_0^\infty\int_0^\infty
cos(yx)g_\xi(y)\ud y\ud x = \pi g_\xi(0) < \infty.
\]
}
\end{remark}

We continue with the following technical lemma, which in fact provides
a~correct normalization for our central limit theorems.

\begin{lemma}\label{normalization-factor}
Consider the symmetric two-variable function $\psi_L(s,t):=
\varphi_\xi(L \vert t-s \vert)$ on $[0,T]\times[0,T]$. Then $\psi_L
\in\HH^{\otimes2}$, and moreover, as $L \to\infty$, we have
\begin{equation}
\label{eq:variance} \lim_{L\rightarrow\infty} L\Vert\psi_L
\Vert^2_{\HH^{\otimes2}}= \sigma^2_{T} < \infty,
\end{equation}
where $\sigma^2_T:= 2\, T \int_{0}^{\infty} \varphi^2_{\xi}(x) \ud x$
is independent of the Hurst parameter $H$.
\end{lemma}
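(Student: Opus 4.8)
The plan is to first write $\Vert\psi_L\Vert^2_{\HH^{\otimes 2}}$ as an explicit sum of integrals and then analyse each summand as $L\to\infty$. Since $|\varphi_\xi|\le 1$, the kernel $\psi_L$ is bounded, hence $\psi_L\in\HH^{\otimes 2}$ (bounded functions lie in $|\HH|^{\otimes2}\subseteq\HH^{\otimes2}$); this is also confirmed a posteriori by the finiteness of the integrals below. Arguing as in Lemma~\ref{lma:inner_product} — equivalently, using that the inner product of $\HH$ splits into a Brownian $L^2$-part and an $\alpha_H$-weighted fractional part, and taking the tensor square — for the symmetric kernel $\psi_L$ one obtains
\[
\Vert\psi_L\Vert^2_{\HH^{\otimes2}} \;=\; A_L + 2B_L + C_L ,
\]
with $A_L=\int_{[0,T]^2}\varphi_\xi^2\bigl(L|x-y|\bigr)\,\ud x\,\ud y$, $B_L=\alpha_H\int_{[0,T]^3}\varphi_\xi\bigl(L|s-x|\bigr)\varphi_\xi\bigl(L|s-y|\bigr)|x-y|^{2H-2}\,\ud s\,\ud x\,\ud y$ and $C_L=\alpha_H^2\int_{[0,T]^4}\varphi_\xi\bigl(L|s-x|\bigr)\varphi_\xi\bigl(L|t-y|\bigr)|s-t|^{2H-2}|x-y|^{2H-2}\,\ud s\,\ud t\,\ud x\,\ud y$. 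It then suffices to prove that $LA_L\to\sigma_T^2<\infty$ and that $LB_L,\,LC_L\to0$.

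For $A_L$, reducing the double integral to the single variable $u=|x-y|$ and substituting $x=Lu$ gives $A_L=\frac{2}{L}\int_0^{LT}\bigl(T-\tfrac{x}{L}\bigr)\varphi_\xi^2(x)\,\ud x$; since $\varphi_\xi^2\le|\varphi_\xi|$ is integrable on $(0,\infty)$ by Assumption~\ref{assumption:main}, dominated convergence yields $LA_L\to 2T\int_0^\infty\varphi_\xi^2(x)\,\ud x=\sigma_T^2<\infty$, which visibly does not depend on $H$. For $B_L$ I would substitute $x=s+a/L$, $y=s+b/L$: the Jacobian produces $L^{-2}$ while $|x-y|^{2H-2}=L^{2-2H}|a-b|^{2H-2}$, so $B_L=\alpha_H L^{-2H}\int_0^T\bigl(\int\!\!\int\varphi_\xi(|a|)\varphi_\xi(|b|)|a-b|^{2H-2}\,\ud a\,\ud b\bigr)\ud s$ over a truncated $(a,b)$-domain. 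This inner double integral is absolutely bounded by $\int_{\R^2}|\varphi_\xi(|a|)||\varphi_\xi(|b|)||a-b|^{2H-2}\,\ud a\,\ud b<\infty$, as one checks by splitting according to $|a-b|\le1$ or $|a-b|>1$ and using $|\varphi_\xi|\le 1$, $|\varphi_\xi|\in L^1$ and $2H-2>-1$. Hence $B_L=O(L^{-2H})$ and $LB_L=O(L^{1-2H})\to0$ since $H>\tfrac12$.

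The term $C_L$ is the crux. After the substitution $x=s+a/L$, $y=t+b/L$ one gets $|C_L|\le\alpha_H^2L^{-2}\int_{[0,T]^2}|s-t|^{2H-2}\widetilde K_L(s-t)\,\ud s\,\ud t$, where $\widetilde K_L(r)=\int_{\R^2}|\varphi_\xi(|a|)||\varphi_\xi(|b|)|\,|r+(a-b)/L|^{2H-2}\,\ud a\,\ud b$. One cannot pass to the limit naively inside, since the formal limit leads to $\int_{[0,T]^2}|s-t|^{4H-4}\,\ud s\,\ud t$, which diverges for $H\le\tfrac34$. The key point is instead the \emph{uniform} estimate $\widetilde K_L(r)\le C\,L^{2-2H}$, valid for all $r\in\R$. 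To obtain it, integrate out one variable and substitute $d=r+(a-b)/L$ to write $\widetilde K_L(r)=L\int_\R|d|^{2H-2}\rho\bigl(L(d-r)\bigr)\,\ud d$, where $\rho=g*g$ with $g=|\varphi_\xi(|\cdot|)|\in L^1(\R)\cap L^\infty(\R)$, so that $\rho$ is bounded and $\Vert\rho\Vert_{L^1}=\Vert g\Vert_{L^1}^2$; splitting the $d$-integral into $\{|d|\le 1/L\}$ (where $\int_{|d|\le1/L}|d|^{2H-2}\,\ud d=\tfrac{2}{2H-1}L^{1-2H}$) and $\{|d|>1/L\}$ (where $|d|^{2H-2}\le L^{2-2H}$ and $\int_\R\rho(L(d-r))\,\ud d=\Vert\rho\Vert_{L^1}/L$) gives the claim. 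Consequently $|C_L|\le C\alpha_H^2 L^{-2}\cdot L^{2-2H}\int_{[0,T]^2}|s-t|^{2H-2}\,\ud s\,\ud t=O(L^{-2H})$, hence $LC_L=O(L^{1-2H})\to0$. Combining the three estimates yields $L\Vert\psi_L\Vert^2_{\HH^{\otimes2}}\to\sigma_T^2$, as claimed. The only genuine obstacle is the uniform control of $\widetilde K_L$ just described; the remaining steps are elementary scaling together with dominated convergence.
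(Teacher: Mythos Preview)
Your proof is correct and uses the same three-term decomposition as the paper (your $A_L+2B_L+C_L$ are the paper's $A_1,A_2,A_3$), but the analysis of the individual terms follows a genuinely different route. For the leading term $A_L$ the paper changes variables and applies L'H\^opital's rule, whereas you reduce to a single variable and invoke dominated convergence directly; your version is shorter. For the cross term $B_L$ and the purely fractional term $C_L$, the paper (Appendix~\ref{B}) proceeds by differentiating $\tilde A_2(L)$ in $L$, splitting the derivative into pieces $J_1,J_{2,1},J_{2,2},J_{2,3}$, bounding each, and reapplying L'H\^opital to $J_{2,3}$; the term $A_3$ is then declared analogous. You instead perform a single scaling substitution and bound the resulting inner integrals in one stroke: for $B_L$ by showing $\int_{\R^2}|\varphi_\xi(|a|)\varphi_\xi(|b|)|\,|a-b|^{2H-2}\,\ud a\,\ud b<\infty$, and for $C_L$ via the uniform convolution estimate $\tilde K_L(r)\le CL^{2-2H}$ obtained by writing $\tilde K_L(r)=L\int_\R|d|^{2H-2}\rho(L(d-r))\,\ud d$ and splitting at $|d|=1/L$. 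This convolution bound is the key idea you supply that the paper does not; it replaces the iterated L'H\^opital computations of Appendix~\ref{B} by a short self-contained estimate, and it gives $C_L=O(L^{-2H})$ uniformly in $H\in(\tfrac12,1)$ without any further case analysis. The paper's approach, by contrast, relies only on elementary calculus and L'H\^opital, at the cost of being longer and requiring the auxiliary observation $\varphi_\xi(TL)=o(L^{2H-2})$. Both routes produce $B_L,C_L=O(L^{-2H})$, which suffices since $H>\tfrac12$.
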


\begin{remark}{ \rm We point it out that the variance $\sigma^2_T$ in
Lemma \ref{normalization-factor} is finite. This is a simple
consequence of Assumption \ref{assumption:main} and the fact that the
characteristic function $\varphi_\xi$ is bounded by one over the real
line.
}
\end{remark}

\begin{proof}
Throughout the proof, $C$ denotes unimportant constant depending on~$T$
and~$H$, which may vary from line to line. First, note that clearly
$\psi_L \in\HH^{\otimes2}$ since $\psi_L$ is a bounded function. In
order to prove \eqref{eq:variance}, we show that, as $L\to\infty$,
\[
\Vert\psi_L \Vert^2_{\HH^{\otimes2}} \sim
\frac{1}{L}.
\]
Next, by applying Lemma \ref{lma:inner_product} we obtain $\Vert\psi_L
\Vert^2_{\HH^{\otimes2}}=A_1 +A_2+A_3$, where
\begin{align}
A_1 &:= \int_{[0,T]^2}\varphi_\xi^2\big(L|t-s|\xch{\big)}{))}
\ud t\ud s,
\\
A_2 &:= \alpha_H\int_{[0,T]^3}
\varphi_\xi\big(L|t-u|\big)\varphi_\xi \big(L|s-u|\big)|t-s|^{2H-2}
\ud t\ud s\ud u,
\\
A_3 & : = \alpha_H^2\int
_{[0,T]^4}\varphi_\xi\big(L|t-u|\big)\varphi_\xi
\big(L|s-v|\big)|t-s|^{2H-2}|v-u|^{2H-2}\ud u\ud v\ud t\ud s.
\end{align}
First, we show that $A_1 \sim\frac{1}{L}$. By change of variables
$y=\frac{L}{T}s$ and $x=\frac{L}{T}t$ we obtain
\[
A_1 = \frac{T^2}{L^2}\int_0^L
\int_0^L \varphi^2_\xi\big(T|x-y|\big)
\ud x\ud y.
\]
Now, by applying L'H\^opital's rule and some elementary computations we
obtain that
\begin{equation*}
\begin{split} \lim_{L\to\infty} L^{-1} \int
_0^L \int_0^L
\varphi^2_\xi\big(T|x-y|\big)\ud x\ud y &= \lim
_{L\to\infty} 2\int_0^L
\varphi^2_\xi\bigl(T(L-x)\bigr)\ud x
\\
&= \frac{2}{T}\int_0^\infty
\varphi^2_\xi(y)\ud y, \end{split} %
\end{equation*}
which is finite by Assumption \ref{assumption:main}. Consequently, we
get
\[
\lim_{L\to\infty} L A_1 = 2T \int_0^\infty
\varphi^2_\xi(y)\ud y,
\]
or, in other words, $A_1 \sim L^{-1}$. To complete the proof, it is
shown in Appendix~B that $\lim_{L\to\infty} L(A_2 + A_3)=0$.
\end{proof}
%

We also apply the following proposition. The proof is rather technical
and is postponed to Appendix A.
\begin{prop}
\label{prop:CLT} Consider the symmetric two-variable function
$\psi_L(s,t):=\break \varphi_\xi(L \vert t-s \vert)$ on $[0,T]\times[0,T]$.
Denote
\[
\tilde{\psi}_L(t,s) = \frac{\psi_L(s,t)}{\sqrt{2} \Vert\psi_L \Vert
_{\HH^{\otimes2}}}.
\]
Then, for any $H\in (\frac{1}{2},1 )$, as $L \to\infty$, we
have
\[
I_2^X (\tilde{\psi}_L ) \stackrel{
\text{law}} {\longrightarrow} \mathscr{N}(0,1).
\]
\end{prop}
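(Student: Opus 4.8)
The plan is to apply the fourth-moment-type criterion of Theorem~\ref{t.2.2} with $q=2$. Write $F_L:=I_2^X(\tilde\psi_L)$. By Lemma~\ref{lma:inner_product} and the very definition of $\tilde\psi_L$ one has $\E[F_L^2]=2\Vert\tilde\psi_L\Vert^2_{\HH^{\otimes2}}=1$ for every $L>0$, so the candidate limit is $\mathscr{N}(0,1)$ and it suffices to prove that $\Vert DF_L\Vert^2_{\HH}\to2$ in $L^2(\varOmega)$ as $L\to\infty$.

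For a second-chaos element the Malliavin derivative is explicit, $D_tF_L=2I_1^X(\tilde\psi_L(\cdot,t))$, and the multiplication formula for multiple Wiener integrals yields the chaos expansion
\[
\Vert DF_L\Vert^2_{\HH}=4\Vert\tilde\psi_L\Vert^2_{\HH^{\otimes2}}+4\,I_2^X(\tilde\psi_L\otimes_1\tilde\psi_L)=2+4\,I_2^X(\tilde\psi_L\otimes_1\tilde\psi_L),
\]
where $\tilde\psi_L\otimes_1\tilde\psi_L$ denotes the first contraction taken with respect to the inner product of $\HH$ (it is again symmetric since $\tilde\psi_L$ is). Since $\E[I_2^X(g)^2]=2\Vert g\Vert^2_{\HH^{\otimes2}}$, the convergence $\Vert DF_L\Vert^2_{\HH}\to2$ in $L^2$ is therefore equivalent to $\Vert\tilde\psi_L\otimes_1\tilde\psi_L\Vert_{\HH^{\otimes2}}\to0$, which is just the contraction form of condition (ii) in Theorem~\ref{t.2.2}.

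By homogeneity $\tilde\psi_L\otimes_1\tilde\psi_L=\bigl(2\Vert\psi_L\Vert^2_{\HH^{\otimes2}}\bigr)^{-1}\,\psi_L\otimes_1\psi_L$, so invoking Lemma~\ref{normalization-factor}, which gives $\Vert\psi_L\Vert^2_{\HH^{\otimes2}}\sim\sigma^2_T/L$, the whole matter reduces to showing
\[
\Vert\psi_L\otimes_1\psi_L\Vert^2_{\HH^{\otimes2}}=o\bigl(L^{-4}\bigr),\qquad L\to\infty.
\]
To handle this I would first expand the contraction: since the inner product of $\HH$ is the sum of a Lebesgue part $\int fg$ and a fractional part $\alpha_H\iint f(s)g(t)|t-s|^{2H-2}\,\ud s\,\ud t$, the function $(\psi_L\otimes_1\psi_L)(x,y)=\langle\psi_L(x,\cdot),\psi_L(\cdot,y)\rangle_{\HH}$ splits into a finite sum of explicit multiple integrals in which $\varphi_\xi$ is evaluated at arguments of the form $L\vert a-b\vert$ and the Riesz kernel $|a-b|^{2H-2}$ occurs at most twice; taking the $\HH^{\otimes2}$-norm (again a Lebesgue plus a fractional contribution in each slot) produces finitely many such terms. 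On each of them I would rescale all variables by $L$, bound $|\varphi_\xi|\le1$ where convenient, and combine the integrability $\int_0^\infty|\varphi_\xi(x)|\,\ud x<\infty$ from Assumption~\ref{assumption:main} with the local integrability and at most polynomial growth of $|t-s|^{2H-2}$ on $[0,T]$ to extract a factor of order $L^{-4}$ together with a vanishing term.

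The main obstacle is exactly this last estimate: one must make the decay supplied by $\varphi_\xi\in L^1(\R_+)$ overcome the singular, $H$-dependent kernel $|t-s|^{2H-2}$ in the various cross terms, uniformly enough to beat the normalization $L^{-4}$, and it has to work both when $H$ is close to $1$ (bounded kernel) and when $H$ is close to $1/2$ (kernel concentrating on the diagonal). Since this is long and bookkeeping-heavy but conceptually routine, it is carried out separately in Appendix~\ref{A}. Once $\Vert\psi_L\otimes_1\psi_L\Vert_{\HH^{\otimes2}}=o(L^{-2})$ is established, Theorem~\ref{t.2.2} gives $I_2^X(\tilde\psi_L)\stackrel{\text{law}}{\longrightarrow}\mathscr{N}(0,1)$; the same estimate, together with $\operatorname{Var}\Vert DF_L\Vert^2_{\HH}\le 16\,\E[I_2^X(\tilde\psi_L\otimes_1\tilde\psi_L)^2]$, also feeds Theorem~\ref{B-E-bound} to produce the Berry--Esseen bound announced in the introduction.
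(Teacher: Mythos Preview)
Your plan coincides with the paper's: apply Theorem~\ref{t.2.2} with $q=2$, compute $D_tF_L=2I_1^X(\tilde\psi_L(\cdot,t))$, use the product formula to write $\Vert DF_L\Vert_\HH^2=2+4I_2^X(\tilde\psi_L\otimes_1\tilde\psi_L)$, and reduce the problem to estimating the $\HH^{\otimes2}$-norm of the contraction. The expansion of $\Vert\psi_L\otimes_1\psi_L\Vert^2_{\HH^{\otimes2}}$ into finitely many mixed Lebesgue/Riesz integrals, followed by rescaling and repeated use of $|\varphi_\xi|\le1$ and $\int_0^\infty|\varphi_\xi|<\infty$, is exactly what Appendix~\ref{A} does (its five terms $A_1,\dots,A_5$ are precisely the pieces you describe).

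There is, however, a real bookkeeping error in the target rate. From $\tilde\psi_L\otimes_1\tilde\psi_L=(2\Vert\psi_L\Vert^2_{\HH^{\otimes2}})^{-1}\psi_L\otimes_1\psi_L$ and $\Vert\psi_L\Vert^2_{\HH^{\otimes2}}\sim\sigma_T^2/L$ one obtains
\[
\Vert\tilde\psi_L\otimes_1\tilde\psi_L\Vert^2_{\HH^{\otimes2}}
=\frac{1}{4\Vert\psi_L\Vert^4_{\HH^{\otimes2}}}\,
\Vert\psi_L\otimes_1\psi_L\Vert^2_{\HH^{\otimes2}}
\sim\frac{L^{2}}{4\sigma_T^4}\,
\Vert\psi_L\otimes_1\psi_L\Vert^2_{\HH^{\otimes2}},
\]
so the correct requirement is $\Vert\psi_L\otimes_1\psi_L\Vert^2_{\HH^{\otimes2}}=o(L^{-2})$, not $o(L^{-4})$. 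This matters: the quantity is genuinely of order $L^{-3}$ (for instance the pure Brownian piece, $A_1$ in Appendix~\ref{A}, rescales to $L^{-4}$ times a cyclic integral over $[0,L]^4$ that grows like $L$), and $L^{-3}$ is $o(L^{-2})$ but not $o(L^{-4})$. If you pursued the stated target you would carry out the estimates, arrive at $O(L^{-3})$, and incorrectly conclude that the argument fails. Once the exponent is corrected to $o(L^{-2})$, your sketch and the feed into Theorem~\ref{B-E-bound} go through and match the paper verbatim.
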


Our main theorem is the following.
\begin{thm}\label{main-thm}
Assume that the characteristic function $\varphi_\xi$ of a symmetric
random variable $\xi$ satisfies Assumption~\textup{\ref{assumption:main}} and
let $\sigma_{T}^2$ be given by \eqref{eq:variance}. Then, as $L \to
\infty$, we have the following asymptotic statements:
\begin{enumerate}
\item[\rm1.] if $H\in (\frac{3}{4},1 )$, then
\begin{equation*}
\sqrt{L} \bigl( \E_{\xi} I_{T}(X;L \, \xi)-[X,X]_{T}
\bigr) \stackrel{\text {law}} {\longrightarrow} \mathscr{N}\bigl(0,
\sigma_T^2\bigr).
\end{equation*}

\item[\rm2.] if $H=\frac{3}{4}$, then
\begin{equation*}
\sqrt{L} \bigl( \E_{\xi} I_{T}(X;L \, \xi)-[X,X]_{T}
\bigr) \stackrel{\text {law}} {\longrightarrow} \mathscr{N}\bigl(\mu,
\sigma_T^2\bigr),
\end{equation*}
where $\mu= 2\alpha_HT\int_0^\infty\varphi_\xi(x)x^{2H-2}\ud x$.
\item[\rm3.] if $H\in (\frac{1}{2},\frac{3}{4} )$, then
\begin{equation*}
L^{2H-1} \bigl( \E I_{T}(X; L \, \xi)-[X,X]_{T}
\bigr) \stackrel{\P } {\longrightarrow} \mu,
\end{equation*}
where the real number $\mu$ is given in item $2$. Notice that when
$H\in (\frac{1}{2},\frac{3}{4} )$, we have $2H-1 <
\frac{1}{2}$.
\end{enumerate}
\end{thm}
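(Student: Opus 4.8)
The proof combines the decomposition of the randomized periodogram from Lemma~\ref{l:mixed-f} together with the transformation to a Skorokhod integral in Lemma~\ref{cor:relation}, the normalization constant from Lemma~\ref{normalization-factor}, and the fourth-moment-type convergence in Proposition~\ref{prop:CLT}. Starting from \eqref{rp-path}, I would write
\[
\E_{\xi} I_{T}(X;L\xi)-[X,X]_{T}
= 2\int_0^T\int_0^t \varphi_\xi\bigl(L(t-s)\bigr)\ud X_s\ud X_t
= 2\int_0^T u_t\ud X_t,
\]
and apply Lemma~\ref{cor:relation} to split this into a Skorokhod part and a ``trace'' part:
\[
\E_{\xi} I_{T}(X;L\xi)-[X,X]_{T}
= 2\int_0^T u_t\,\delta X_t
+ 2\alpha_H\int_0^T\int_0^T D^{(B^H)}_s u_t\,|t-s|^{2H-2}\ud s\ud t
=: M_L + R_L.
\]
The Skorokhod part $M_L$ should be identified with the second multiple Wiener--It\^o integral $I_2^X(\psi_L)$, where $\psi_L(s,t)=\varphi_\xi(L|t-s|)$ is the symmetric kernel of Lemma~\ref{normalization-factor}; here one uses that $\delta$ iterated against an adapted-type kernel produces a double integral, and symmetrization of the kernel is free inside $I_2^X$. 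Then $\sqrt L\,M_L = \sqrt L\,\|\psi_L\|_{\HH^{\otimes2}}\sqrt2\,I_2^X(\tilde\psi_L)$, and since $L\|\psi_L\|^2_{\HH^{\otimes2}}\to \sigma_T^2/2$ by Lemma~\ref{normalization-factor} while $I_2^X(\tilde\psi_L)\Rightarrow\mathscr N(0,1)$ by Proposition~\ref{prop:CLT}, Slutsky's lemma gives $\sqrt L\,M_L\Rightarrow\mathscr N(0,\sigma_T^2)$ for every $H\in(1/2,1)$.

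The whole content of the three cases then comes from analyzing the deterministic remainder $R_L = 2\alpha_H\int_0^T\int_0^T D^{(B^H)}_s u_t\,|t-s|^{2H-2}\ud s\ud t$. Computing $D^{(B^H)}_s u_t = \varphi_\xi(L(t-s))\mathbf 1_{\{s<t\}}$ (only the $B^H$-integral part of $u_t$ contributes, and its Malliavin derivative is the integrand evaluated at $s$), one finds
\[
R_L = 2\alpha_H\int_0^T\int_0^t \varphi_\xi\bigl(L(t-s)\bigr)(t-s)^{2H-2}\ud s\ud t.
\]
The inner substitution $x=L(t-s)$ turns this into $\tfrac{2\alpha_H}{L^{2H-1}}\int_0^T\int_0^{Lt}\varphi_\xi(x)x^{2H-2}\ud x\,\ud t$, and the assumption $\int_0^\infty|\varphi_\xi(x)|\ud x<\infty$ together with $x^{2H-2}\le 1$ for $x\ge1$ and integrability near $0$ shows $\int_0^\infty\varphi_\xi(x)x^{2H-2}\ud x$ is finite, so $L^{2H-1}R_L\to 2\alpha_H T\int_0^\infty\varphi_\xi(x)x^{2H-2}\ud x = \mu$. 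Now the trichotomy is purely about the exponent: if $H>3/4$ then $2H-1>1/2$, so $\sqrt L\,R_L = L^{1/2-(2H-1)}\cdot L^{2H-1}R_L\to 0$, giving a centered limit; if $H=3/4$ then $2H-1=1/2$, so $\sqrt L\,R_L\to\mu$ and the limit is $\mathscr N(\mu,\sigma_T^2)$; if $H<3/4$ then $2H-1<1/2$, so $\sqrt L\,M_L\to 0$ in probability (its variance is $O(L\cdot L^{-1})$... actually I need to rescale by $L^{2H-1}$ instead) and the dominant term is $L^{2H-1}R_L\to\mu$, so $L^{2H-1}(\E_\xi I_T(X;L\xi)-[X,X]_T)\to\mu$ in probability. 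One must check in the third case that $L^{2H-1}M_L\to 0$ in $L^2$, which follows from $\E M_L^2 = 2\|\psi_L\|^2_{\HH^{\otimes2}}\sim C/L$ and $L^{2(2H-1)}/L\to 0$ since $2H-1<1/2$ implies $2(2H-1)<1$.

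The main obstacle is the rigorous identification $2\int_0^T u_t\,\delta X_t = I_2^X(\psi_L)$ — i.e.\ checking that the iterated Skorokhod integral of the adapted-in-the-Brownian-sense process $u$ against $X$ coincides with a genuine second-chaos element with the stated kernel, so that Proposition~\ref{prop:CLT} and Lemma~\ref{normalization-factor} apply verbatim. This requires care because $X$ is not a semimartingale when $H\le 3/4$ and the ``pathwise $=$ Skorokhod $+$ trace'' bookkeeping in Lemma~\ref{cor:relation} must be matched against the chaos expansion; once that identification is in place, the convergence of $M_L$ is immediate from the cited results, and everything else reduces to the elementary scaling estimates for $A_1$ (Lemma~\ref{normalization-factor}) and for $R_L$ described above. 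A secondary technical point is justifying the interchange of limit and integral in the L'H\^opital-type computation for $R_L$ when $H$ is close to $1/2$, where $x^{2H-2}$ is nearly singular at the origin, but this is handled by splitting the $x$-integral at $1$ and using boundedness of $\varphi_\xi$ on $[0,1]$.
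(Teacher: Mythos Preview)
Your proposal is correct and follows essentially the same route as the paper: decompose via Lemmas~\ref{l:mixed-f} and~\ref{cor:relation} into $I_2^X(\psi_L)$ plus a deterministic trace term, treat the chaos part with Lemma~\ref{normalization-factor} and Proposition~\ref{prop:CLT} (Slutsky), compute the trace by the substitution $x=L(t-s)$ to get $L^{2H-1}R_L\to\mu$, and then read off the three regimes from comparing the exponent $2H-1$ with $1/2$. One bookkeeping slip: Lemma~\ref{normalization-factor} states $L\|\psi_L\|^2_{\HH^{\otimes2}}\to\sigma_T^2$, not $\sigma_T^2/2$, so the factor $\sqrt2$ in your expression for $\sqrt L\,M_L$ must be tracked consistently with the identification $M_L=I_2^X(\psi_L)$ (the paper's own proof is equally terse on this point and simply writes the decomposition as $I_2^X(\psi_L)$ plus the trace without further comment on the factor of $2$).
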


\begin{proof}
First, by applying Lemmas \ref{l:mixed-f} and \ref{cor:relation} we can
write
\[
\E I_{T}(X;L \xi)-[X,X]_{T} = I^X_2(
\psi_L) + \alpha_H\int_0^T
\int_0^T \varphi_\xi\big(L|t-s|\big)|t-s|^{2H-2}
\ud s\ud t.
\]
Consequently, we obtain
\begin{equation*}
\begin{split}
&\sqrt{L} \bigl( \E I_{T}(X;L\xi)-[X,X]_{T} \bigr) \\
&\quad = \sqrt{L}\, I^X_2(\psi_L) + \sqrt{L} \, \alpha_H\int_0^T\int_0^T \varphi_\xi \big(L|t-s|\big)|t-s|^{2H-2}\ud s\ud t\\
&\quad :=A_1 + A_2.
\end{split} %
\end{equation*}
Now, thanks to Proposition \ref{prop:CLT}, for any
$H\in (\frac{1}2,1 )$, we have
\[
A_1 = \sqrt{L}\, \Vert\psi_L \Vert_{\HH^{\otimes2}}
I_2^X (\tilde{\psi }_L) \stackrel{
\text{law}} {\rightarrow} \mathscr{N}\bigl(0,\sigma_T^2
\bigr),
\]
where $\sigma_H^2$ is given by (\ref{eq:variance}). Hence, it remains
to study the term $A_2$. Using change of variables $y=\frac{L}{T}s$ and
$x=\frac{L}{T}t$, we obtain
\begin{equation*}
\begin{split}
&\int_0^T \int_0^T \varphi_\xi\big(L|t-s|\big)|t-s|^{2H-2}\ud s\ud t  \\
& \quad = T^{2H}L^{-2H}\int_0^L\int_0^L\varphi_\xi\big(T|x-y|\big)|x-y|^{2H-2}\ud x\ud y,
\end{split}
\end{equation*}
where by L'H\^opital's rule we obtain
\[
\lim_{L\rightarrow\infty} L^{-1} \int_0^L
\int_0^L \varphi_\xi
\big(T|x-y|\big)|x-y|^{2H-2}\ud x\ud y = 2T^{1-2H}\int
_0^\infty\varphi_\xi (x)x^{2H-2}
\ud x.
\]
Note also that the integral in the right-hand side of the last identity
is finite by Assumption \ref{assumption:main}. Consequently, we obtain
\begin{align}
\label{eq:trace_limit} %
&\lim_{L\rightarrow\infty}L^{2H-1}\alpha_H  \int_0^T \int_0^T \varphi _\xi\big(L|t-s|\big)|t-s|^{2H-2}\ud s\ud t\nonumber\\
&\quad = 2 \alpha_H T\int_0^\infty\varphi_\xi(x)x^{2H-2}\ud x = \mu.
\end{align}
Therefore,
\[
\lim_{L\rightarrow\infty} A_2 = \lim_{L\rightarrow\infty}
L^{\frac{3}2-2H}\mu,
\]
which converges to zero for $H\in (\frac{3}4,1 )$, and item $1$
of the claim is proved. Similarly, for $H=\frac{3}4$, we obtain
\[
\lim_{L\rightarrow\infty} A_2 = \mu,
\]
which proves item $2$ of the claim. Finally, for item $3$, from
(\ref{eq:trace_limit}) we infer that, as $L \to\infty$,
\[
L^{2H-1}\alpha_H\int_0^T
\int_0^T \varphi_\xi\big(L|t-s|\big)|t-s|^{2H-2}
\ud s \ud t \longrightarrow\mu.
\]
Furthermore, for the term $I_2^X(\psi_L)$, we obtain
\[
L^{2H-1}\, I^X_2(\psi_L) =
L^{2H - \frac{3}2} \times\sqrt{L} \, I^X_2(
\psi_L) \stackrel{\P} {\to} 0
\]
as $L \to\infty$. This is because $H<\frac{3}4$ implies $2H-\frac{3}2 < 0$
and moreover $\sqrt{L}\, I^X_2(\psi_L) \stackrel{\text{law}}{\to}
\mathscr{N}(0,1)$ and $L^{2H-\frac{3}2}\rightarrow0$.
\end{proof}

\begin{cor}
When $X=W$ is a standard Brownian motion, that is, if the fractional
Brownian motion part drops, then with similar arguments as in Theorem~\textup{\ref{main-thm}}, we obtain

\begin{equation*}
\sqrt{L} \bigl( \E_{\xi} I_{T}(X;L \, \xi)-[X,X]_{T}
\bigr) \stackrel{\text {law}} {\longrightarrow} \mathscr{N}\bigl(0,
\sigma_T^2\bigr),
\end{equation*}
where $\sigma_T^2=2 T \int_0^\infty\varphi^2_\xi(x) \ud x$, and
$\varphi_\xi$ is the characteristic function of $\xi$.
\end{cor}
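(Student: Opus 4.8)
The plan is to run the proof of Theorem~\ref{main-thm} verbatim with the fractional Brownian part switched off; the simplification is that the ``trace'' term $A_2$ of that proof disappears and the ambient Hilbert space becomes $\HH=L^2([0,T])$. First I would record the chaos expansion: applying Lemma~\ref{l:mixed-f} with $B^H\equiv0$ (equivalently, combining \eqref{eq:perio} with the pathwise It\^o formula of Theorem~\ref{t:f-ito}) and using that for the \emph{adapted} integrand $u_t=\int_0^t\varphi_\xi(L(t-s))\,\ud W_s$ the pathwise, Skorokhod and It\^o integrals all coincide, one gets
\[
\E_\xi I_T(W;L\,\xi)-[W,W]_T \;=\; 2\int_0^T\!\!\int_0^t \varphi_\xi\big(L(t-s)\big)\,\ud W_s\,\ud W_t \;=\; I_2^W(\psi_L),
\]
where $\psi_L(s,t)=\varphi_\xi(L|t-s|)$ is the symmetric kernel of Lemma~\ref{normalization-factor}. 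Since $\HH=L^2([0,T])$, there is no correction term $\alpha_H\int\!\int\varphi_\xi(L|t-s|)|t-s|^{2H-2}$: the analogue of the term $A_2$ in Theorem~\ref{main-thm} is identically zero.

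Next, the normalization. Because $\HH^{\otimes2}=L^2([0,T]^2)$ we have $\Vert\psi_L\Vert_{\HH^{\otimes2}}^2=\int_{[0,T]^2}\varphi_\xi^2(L|t-s|)\,\ud s\,\ud t=A_1$, and the computation already carried out in the proof of Lemma~\ref{normalization-factor} gives $LA_1\to 2T\int_0^\infty\varphi_\xi^2(x)\,\ud x=\sigma_T^2$, hence $\sqrt L\,\Vert\psi_L\Vert_{\HH^{\otimes2}}\to\sigma_T$. Setting $\tilde\psi_L=\psi_L/(\sqrt2\,\Vert\psi_L\Vert_{\HH^{\otimes2}})$ we have $\E[I_2^W(\tilde\psi_L)^2]=1$ and $\sqrt L\,I_2^W(\psi_L)=\big(\sqrt L\,\Vert\psi_L\Vert_{\HH^{\otimes2}}\big)\,I_2^W(\tilde\psi_L)$, so it remains to establish the Brownian analogue of Proposition~\ref{prop:CLT}, namely $I_2^W(\tilde\psi_L)\stackrel{\text{law}}{\to}\mathscr N(0,1)$. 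By Theorem~\ref{t.2.2} it suffices to check that $\operatorname{Var}\big(\Vert DI_2^W(\tilde\psi_L)\Vert_{\HH}^2\big)\to0$, and since for $F=I_2^W(g)$ with $g\in\HH^{\odot2}$ one has $\operatorname{Var}(\Vert DF\Vert^2_{\HH})\le c\,\Vert g\otimes_1 g\Vert^2_{\HH^{\otimes2}}$ for an absolute constant $c$ (where $(g\otimes_1 g)(x,y)=\int_0^T g(x,u)g(u,y)\,\ud u$), the whole matter reduces to the contraction estimate $\Vert\psi_L\otimes_1\psi_L\Vert_{\HH^{\otimes2}}^2=o\big(\Vert\psi_L\Vert_{\HH^{\otimes2}}^4\big)=o(L^{-2})$.

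The only genuine work is this contraction bound, and it is short. Put $f_L(w):=|\varphi_\xi(L|w|)|$ for $w\in\R$; then for $x,y\in[0,T]$,
\[
\big|(\psi_L\otimes_1\psi_L)(x,y)\big| = \bigg|\int_0^T\varphi_\xi\big(L|x-u|\big)\varphi_\xi\big(L|u-y|\big)\,\ud u\bigg| \le (f_L\ast f_L)(x-y),
\]
so, integrating over $[0,T]^2$ and using that $\{(x,y)\in[0,T]^2:x-y=z\}$ has length $\le T$, $\Vert\psi_L\otimes_1\psi_L\Vert_{\HH^{\otimes2}}^2\le T\,\Vert f_L\ast f_L\Vert_{L^2(\R)}^2$. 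By Young's inequality $\Vert f_L\ast f_L\Vert_{L^2(\R)}\le\Vert f_L\Vert_{L^1(\R)}\Vert f_L\Vert_{L^2(\R)}$, and a scaling change of variables gives $\Vert f_L\Vert_{L^1(\R)}=\frac2L\int_0^\infty|\varphi_\xi(y)|\,\ud y$, which is finite by Assumption~\ref{assumption:main}, and $\Vert f_L\Vert_{L^2(\R)}^2=\frac2L\int_0^\infty\varphi_\xi^2(y)\,\ud y\le\frac2L\int_0^\infty|\varphi_\xi(y)|\,\ud y$, using $|\varphi_\xi|\le1$. Hence $\Vert\psi_L\otimes_1\psi_L\Vert_{\HH^{\otimes2}}^2=O(L^{-3})=o(L^{-2})$, as required; combining the three steps yields $\sqrt L\big(\E_\xi I_T(W;L\,\xi)-[W,W]_T\big)\stackrel{\text{law}}{\to}\mathscr N(0,\sigma_T^2)$. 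I expect this contraction estimate to be the only real obstacle, in the sense that the rest is already contained in the proofs of Lemmas~\ref{l:mixed-f}, \ref{normalization-factor} and Theorem~\ref{main-thm}; the crucial point is that the extra power of $L$ (giving $o(L^{-2})$ rather than the merely $O(L^{-2})$ one gets from Cauchy--Schwarz alone) comes from exploiting $\Vert f_L\Vert_{L^1(\R)}\sim L^{-1}$ through Young's inequality. Feeding the same estimate into Theorem~\ref{B-E-bound} additionally yields a Berry--Esseen bound of order $L^{-1/2}$.
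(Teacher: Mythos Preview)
Your proposal is correct and follows the paper's own route: the Corollary is simply Theorem~\ref{main-thm} with the trace term $A_2$ removed (since $\HH=L^2([0,T])$ when $X=W$) together with the Brownian instance of Lemma~\ref{normalization-factor} and Proposition~\ref{prop:CLT}. Your Young-inequality argument for $\Vert\psi_L\otimes_1\psi_L\Vert^2_{L^2}=O(L^{-3})$ is a clean substitute for the direct estimation of the term $A_1$ in Appendix~\ref{A}; the only inaccuracy is the closing Berry--Esseen remark, where the rate should be $\max\{L^{-1/2},\int_L^\infty\varphi_\xi^2(Tz)\,\ud z\}$ rather than a bare $L^{-1/2}$ (cf.\ the second Remark following Proposition~\ref{prop:Berry-Esseen}).
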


\begin{remark}
{\rm

Note that the proof of Theorem \ref{main-thm} reveals that in the case
$H\in (\frac{1}2,\frac{3}4 )$, for any $\epsilon> \frac{3}{2} -
2H$, we have that, as $L \to\infty$,
\begin{equation*}
\sqrt{L} \bigl( \E I_{T}(X; L \, \xi)-[X,X]_{T} \bigr)
\stackrel{\P } {\longrightarrow} \infty,
\end{equation*}
and, moreover,
\begin{equation*}
L^{\frac{1}{2}-\epsilon} \bigl( \E I_{T}(X; L \, \xi)-[X,X]_{T}
\bigr) \stackrel{\P} {\longrightarrow} 0.
\end{equation*}
}
\end{remark}

\subsection{The Berry--Esseen estimates}
As a consequence of the proof of Theorem \ref{main-thm}, we also obtain
the following Berry--Esseen bound for the semimartingale case.
\begin{prop}
\label{prop:Berry-Esseen} Let all the assumptions of Theorem~\textup{\ref{main-thm}}
hold, and let $H\in (\frac{3}4,1 )$. Furthermore,
let $Z\sim\mathscr{N}(0,\sigma_T^2)$, where the variance $\sigma_T^2$
is given by \textup{(\ref{eq:variance})}. Then there exists a constant $C$
\emph{(}independent of $L)$ such that for sufficiently large $L$, we
have
\[
\sup_{x\in\R} \big|\P (\sqrt{L} \bigl(\E_{\xi}
\bigl(I_{T}(X;L \, \xi )-[X,X]_{T} \bigr) < x \bigr) - \P (Z
< x ) \big| \leq C \rho(L),
\]
where
\[
\rho(L)= \max \Biggl\{ L^{\frac{3}2-2H}, \int_L^\infty
\varphi^2_\xi (Tz)\ud z \Biggr\}.
\]
\end{prop}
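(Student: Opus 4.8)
The plan is to apply the Berry--Esseen bound from Theorem~\ref{B-E-bound} to a suitably normalized element of the second Wiener chaos, and then to account for the deterministic trace term separately. Recall from the proof of Theorem~\ref{main-thm} that
\[
\sqrt{L}\bigl(\E_\xi I_T(X;L\xi)-[X,X]_T\bigr) = \sqrt{L}\,I_2^X(\psi_L) + A_2(L),
\]
where $A_2(L) = \sqrt{L}\,\alpha_H\int_0^T\int_0^T\varphi_\xi(L|t-s|)|t-s|^{2H-2}\ud s\ud t$. For $H\in(\tfrac34,1)$, the computation \eqref{eq:trace_limit} shows $A_2(L)$ is a deterministic quantity of order $L^{3/2-2H}\to 0$. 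So the first step is to set $F_L := \sqrt{L}\,I_2^X(\psi_L) = \sqrt{L}\,\Vert\psi_L\Vert_{\HH^{\otimes 2}}\,\sqrt{2}\,I_2^X(\tilde\psi_L)$, a genuine element of the second Wiener chaos, and bound $\sup_x|\P(F_L<x)-\P(Z<x)|$ via Theorem~\ref{B-E-bound}.

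To apply Theorem~\ref{B-E-bound} I need two inputs: control of $|\E(F_L^2)-\sigma_T^2|$ and control of $\operatorname{Var}\Vert DF_L\Vert^2_{\HH}$. For the first, Lemma~\ref{normalization-factor} gives $\E(F_L^2) = L\cdot 2\Vert\psi_L\Vert^2_{\HH^{\otimes 2}} = L(A_1+A_2+A_3)$ with $LA_1\to 2T\int_0^\infty\varphi_\xi^2(y)\ud y = \sigma_T^2$ and $L(A_2+A_3)\to 0$; one must extract the \emph{rate} of convergence, and the dominant error turns out to come from $L\cdot\tfrac{T^2}{L^2}\int_0^L\int_0^L\varphi_\xi^2(T|x-y|)\ud x\ud y - \sigma_T^2$, which after the L'H\^opital reduction is governed by $\int_L^\infty\varphi_\xi^2(Tz)\ud z$ (plus the faster-decaying $A_2,A_3$ contributions, which are $O(L^{3/2-2H})$ by the Appendix~B estimates for $H>3/4$). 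For the second input, $\operatorname{Var}\Vert DF_L\Vert^2_{\HH}$, the standard second-chaos identity (see \cite{n-p}) expresses this in terms of $\Vert\psi_L\widetilde\otimes_1\psi_L\Vert_{\HH^{\otimes 2}}$, i.e.\ a contraction norm, multiplied by $L^2$; the proof of Proposition~\ref{prop:CLT} in Appendix~A already establishes that $L^2\Vert\psi_L\widetilde\otimes_1\psi_L\Vert^2_{\HH^{\otimes 2}}\to 0$ (this is precisely the fourth-moment condition, equivalent to criterion (ii) of Theorem~\ref{t.2.2}), and a careful reading of that argument yields the quantitative bound $O(L^{3/2-2H})$ for $H\in(\tfrac34,1)$. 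Combining via the inequality in Theorem~\ref{B-E-bound} then gives $\sup_x|\P(F_L<x)-\P(Z<x)| \le C\max\{L^{3/2-2H},\int_L^\infty\varphi_\xi^2(Tz)\ud z\}$.

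Finally I incorporate the deterministic shift $A_2(L)$. Since $\P(F_L+A_2(L)<x) = \P(F_L<x-A_2(L))$ and $Z$ has a bounded density (its density is bounded by $(2\pi\sigma_T^2)^{-1/2}$), we have $\sup_x|\P(Z<x-A_2(L))-\P(Z<x)|\le C|A_2(L)| = O(L^{3/2-2H})$. A triangle inequality
\[
\sup_x\bigl|\P(F_L+A_2(L)<x)-\P(Z<x)\bigr| \le \sup_x\bigl|\P(F_L<x')-\P(Z<x')\bigr| + \sup_x\bigl|\P(Z<x-A_2(L))-\P(Z<x)\bigr|
\]
then absorbs this into $C\rho(L)$ with the same $\rho$. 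The main obstacle is the bookkeeping in the second paragraph: one must go back into the Appendix~A and Appendix~B estimates and track rates rather than mere convergence to zero, in particular checking that all the fractional-Brownian cross terms ($A_2$, $A_3$, and the contraction $\psi_L\widetilde\otimes_1\psi_L$) are $O(L^{3/2-2H})$ and hence dominated by $\rho(L)$, so that the genuinely new contribution to the error is the tail integral $\int_L^\infty\varphi_\xi^2(Tz)\ud z$ coming from the Brownian-type term $A_1$.
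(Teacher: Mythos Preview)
Your proposal is correct and follows essentially the same route as the paper: decompose into the second-chaos term $\sqrt{L}\,I_2^X(\psi_L)$ plus the deterministic trace $A_2(L)$, apply Theorem~\ref{B-E-bound} using the Appendix~A variance estimate (which actually gives the sharper rate $L^{-1/2}$, then dominated by $L^{3/2-2H}$) together with the explicit computation $LA_1-\sigma_T^2 = 2T^2\int_L^\infty\varphi_\xi^2(Tz)\,\ud z$ and the Appendix~B bounds $L(A_2+A_3)=O(L^{1-2H})\le O(L^{3/2-2H})$. Your treatment of the deterministic shift via the bounded density of $Z$ and a triangle inequality is in fact more explicit than the paper's, which simply asserts that controlling $\sup_x|\P(A_1<x)-\P(Z<x)|$ suffices.
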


\begin{proof}
By proof of Theorem \ref{main-thm} we have
\begin{equation*}
\begin{split}
&\sqrt{L} \bigl(\E I_{T}(X;L\xi)-[X,X]_{T} \bigr) \\[3pt]
&\quad = \sqrt{L} \, I^X_2(\psi_L) +
\sqrt{L}\, \alpha_H \int_0^T\int_0^T \varphi_\xi\big(L|t-s|\big)|t-s|^{2H-2}\ud s\ud t\\[3pt]
&\quad =: A_1 + A_2,
\end{split} %
\end{equation*}
where
\[
A_1 = \sqrt{2 L} \Vert\psi_L\Vert_{\HH^{\otimes2}} \,
I_2^X(\tilde {\psi}_L).
\]
Now, we know that the deterministic term $A_2$ converges to zero with
rate $L^{\frac{3}2 - 2H}$ and the term $A_1
\stackrel{\text{law}}{\to}\mathscr{N}(0,\sigma_T^2)$. Hence, in order
to complete the proof, it is sufficient to show that
\[
\sup_{x\in\R} \big| \P(A_1 < x) - \P(Z<x) \big| \leq C
\rho(L).
\]

Now, by using the proof of Proposition \ref{prop:CLT} in \xch{Appendix}{the Appendix} A
we have
\[
\sqrt{ \operatorname{Var} \Vert D F_L\Vert^2_{\HH}
} \leq L^{-\frac{1}2} \leq L^{\frac{3}2-2H}.
\]
Finally, using the notation of the proof of Lemma
\ref{normalization-factor}, we have
\[
E \bigl(F_n^2\bigr) = L \, \Vert\psi_{L}
\Vert^2_{\HH^{\otimes2}} = L \times (A_1 + A_2
+ A_3),
\]
where $A_2 + A_3 \leq CL^{-2H}$. Consequently,
\[
L \times(A_2+A_3) \leq C L^{1-2H} \leq C
L^{\frac{3}2 -2H}.
\]
To complete the proof, we have
\begin{equation*}
\begin{split} LA_1 &= \frac{T^2}{L}\int
_0^L \int_0^L
\varphi^2_\xi\big(T|x-y|\big)\ud y\ud x =\frac{T^2}{L}\int
_0^L \int_{-x}^{L-x}
\varphi^2_\xi(Tz)\ud z\ud x
\\
&=\frac{T^2}{L}\int_{-L}^L \int
_{-z}^{L-z}\varphi^2_\xi(Tz)
\ud x \ud z = T^2 \int_{-L}^L
\varphi^2_\xi(Tz)\ud z
\\
&=2T^2 \int_0^L
\varphi^2_\xi(Tz)\ud z. \end{split} %
\end{equation*}
This gives us
\begin{equation*}
LA_1 - \sigma_T^2 = 2T^2 \int
_L^\infty\varphi^2_\xi(Tz)
\ud z.
\end{equation*}
Now, the claim follows by an application of Theorem \ref{B-E-bound}.
\end{proof}
\begin{remark}\rm
In many cases of interest, the leading term in $\rho(L)$ is the
polynomial term $ L^{\frac{3}2-2H}$, which reveals that the role of the
particular choice of $\varphi_\xi$ affects only to the constant. In
particular, if $\varphi_\xi$ admits an exponential decay, that is,
$\vert\varphi_\xi(t)\vert\leq C_1e^{-C_2t}$ for some constants
$C_1,C_2>0$, then $\int_L^\infty\varphi_\xi^2(Tz)\ud z \leq
C_3e^{-C_4L} \leq CL^{\frac{3}2 -2H}$ for some constants $C_3,C_4,C>0$.
As examples, this is the case if $\xi$ is a standard normal random
variable with characteristic function $\varphi_\xi(t) =
e^{-\frac{t^2}{2}}$ or if $\xi$ is a standard Cauchy random variable
with characteristic function $\varphi_\xi(t) = e^{- \vert t \vert}$.
\end{remark}

\begin{remark} \rm
Consider the case $X=W$, that is, $X$ is a standard Brownian motion. In
this case, the correction term $A_2$ in the proof of Theorem
\ref{main-thm} disappears, and we have
\[
\E\bigl(F_L^2\bigr) - \sigma_T^2
= 2T^2\int_L^\infty
\varphi^{2}_\xi(Tx)\ud x.
\]
Furthermore, by applying L'H\^opital's rule twice and some elementary
computations it can be shown that
\[
\E \bigl[\Vert D F_L \Vert_{\HH}^2 - \E\Vert
D F_L\Vert_{\HH}^2 \bigr]^2 \leq
\big\vert\varphi_\xi(TL) \big\vert\, L^{-1}.
\]
Consequently, in this case, we obtain the Berry--Esseen bound
\[
\sup_{x\in\R} \big|\P (\sqrt{L} \bigl(\E_{\xi}
\bigl(I_{T}(X;L \, \xi )-[X,X]_{T} \bigr) < x \bigr) - \P (Z
< x ) \big| \leq C \rho(L),
\]
where
\[
\rho(L)= \max \Biggl\{ \sqrt{ \big\vert\varphi_\xi(TL) \big\vert
L^{-1}},\int_L^\infty
\varphi^2_\xi(Tz)\ud z \Biggr\},
\]
which is in fact better in many cases of interest. For example, if
$\varphi_\xi$ admits an exponential decay, then we obtain $\rho(L) \leq
e^{-cL}$ for some constant $c$.
\end{remark}

\section*{Acknowledgments}
Azmoodeh is supported by research project F1R-MTH-PUL-12PAMP from
University of Luxembourg, and Lauri Viitasaari was partially funded by
Emil Aaltonen Foundation. The authors are grateful to Christian Bender
for useful discussions.

\appendix

\section{Appendix section}


\subsection{Proof of Proposition \ref{prop:CLT}}\label{A}
Denote $F_L = I_2^X (\tilde{\psi}_L )$ and note that by the definition
of $\tilde{\psi}_L$ we have $\E( F_L^2) = 1$. Hence, it is sufficient
to prove that, as $L \to\infty$,
\[
\E \bigl[\Vert D F_L\Vert_{\HH}^2 - \E\Vert D
F_L\Vert_{\HH}^2 \bigr]^2
\rightarrow0.
\]
Now, using the definition of the Malliavin derivative, we get
\[
D_s F_L = 2 \, I_1^{X}\bigl(
\tilde{\psi}_L(s,\cdot)\bigr) = \frac{\sqrt{2}}{\Vert
\psi_L \Vert_{\HH^{\otimes2}}} \,
I_1^X\bigl(\varphi_\xi(L|s-\cdot|)\bigr).
\]
For the rest of the proof, $C$ denotes unimportant constants, which may
vary from line to line. Furthermore, we also use the short notation
\[
K(\ud s,\ud t) = \delta_0(t-s) \ud s \ud t + \alpha_H|t-s|^{2H-2}
\ud s\ud t,
\]
where $\delta_0$ denotes the Kronecker delta function, to denote the
measure associated to the Hilbert space $\HH$ generated by the mixed
Brownian--fractional Brownian motion $X$. Furthermore, without loss of
generality, we assume that $\varphi_\xi\geq0$. Indeed, otherwise we
simply approximate the integral by taking absolute values inside the
integral, which is consistent with Assumption \ref{assumption:main}.
Now we have
\begin{equation*}
\begin{split} \Vert D_sF_L
\Vert^2_{\HH} &= \frac{C}{\Vert\psi_L\Vert^{2}_{\HH
^{\otimes2}}}\int_0^T
\int_0^TI_1^X\bigl(
\varphi_\xi\big(L|u-\cdot |\big)\bigr)I_1^X\bigl(
\varphi_\xi\big(L|v-\cdot|\big)\bigr)K(\ud u,\ud v). \end{split} %
\end{equation*}
Next, using the multiplication formula for multiple Wiener integrals,
we see that
\begin{equation*}
\begin{split}
&I_1^X\bigl(\varphi_\xi\big(L|u-\cdot|\big)\bigr)I_1^X\bigl(\varphi_\xi\big(L|v-\cdot|\big)\bigr)\\
&\quad =\bigl\langle\varphi_\xi\big(L|u-\cdot|\big),\varphi_\xi\big(L|v-\cdot|\big)\bigr\rangle_{\HH}+ I_2^{X} \bigl(\varphi_\xi\big(L|u-\cdot|\big)\tilde{\otimes} \varphi_\xi \big(L|v-\cdot|\big) \bigr)\\
&\quad =: J_1(u,v) + J_2(u,v),
\end{split} %
\end{equation*}
%
where the term $J_1$ is deterministic, and $J_2$ has expectation zero.
Hence, we need to show that
\begin{equation}
\label{simple_conv} \E \Biggl[\frac{1}{\Vert\psi_L \Vert^{ 2}_{\HH^{\otimes2}}}\int_0^T
\int_0^T J_2(u,v) K(\ud u,\ud v)
\Biggr]^2\rightarrow0.
\end{equation}
Therefore, by applying Fubini's theorem it suffices to show that, as $L
\to\infty$,
\begin{equation}
\frac{1}{\Vert\psi_L\Vert^{4}_{\HH^{\otimes2}}}\int_{[0,T]^4} \E \bigl[J_2(u_1,v_1)J_2(u_2,v_2)\bigr]
K(\ud u_1,\ud v_1)K(\ud u_2,\ud v_2) \rightarrow0.\label{apu_conv}
\end{equation}
First, using isometry (iii) \cite[p.~9]{nualart} , we get that
\begin{equation*}
\begin{split}
&\E \bigl[J_2 (u_1,v_1)J_2(u_2,v_2) \bigr]\\
&\quad  = 2 \int_{[0,T]^4} \bigl(\varphi _\xi\big(L|u_1-\cdot|\big)\tilde{\otimes}\varphi_\xi\big(L|v_1-\cdot|\big) \bigr)(x_1,y_1)\\
&\qquad  \times \bigl(\varphi_\xi\big(L|u_2-\cdot|\big)\tilde{\otimes}\varphi_\xi \big(L|v_2-\cdot|\big) \bigr) (x_2,y_2)\, K(\ud x_1,\ud x_2)K(\ud y_1,\ud y_2).
\end{split} %
\end{equation*}
By plugging into (\ref{apu_conv}) we obtain that it suffices to have
\begin{align}
&\frac{1}{\Vert\psi_L\Vert^{4}_{\HH^{\otimes2}}}  \int_{[0,T]^8} \bigl(\varphi_\xi\big(L|u_1-\cdot|\big)\tilde{\otimes}\varphi_\xi\big(L|v_1-\cdot|\big) \bigr)(x_1,y_1)\nonumber\\
&\quad \times \bigl(\varphi_\xi\big(L|u_2-\cdot|\big)\tilde{\otimes}\varphi_\xi \big(L|v_2-\cdot|\big) \bigr) (x_2,y_2)\nonumber\\
&\quad \times K(\ud x_1,\ud x_2)K(\ud y_1,\ud y_2)K(\ud u_1,\ud v_1)K(\ud u_2,\ud v_2)\rightarrow0.\label{apu_conv2}
\end{align}
The rest of the proof is based on similar arguments as the proof of
Lemma~\ref{normalization-factor}.\break Indeed, again by the symmetric
property of measures $K(\ud x,\ud y)$ and functions\break
$\varphi_\xi(L|u_1-\cdot|)\tilde{\otimes}\varphi_\xi(L|v_1-\cdot|)$ we
obtain five different terms, denoted by $A_k$,\break $k=1,2,3,4,5$, of the forms
\begin{align*}
A_1 &= \int_{[0,T]^4}\varphi_\xi\big(L|u-x|\big)\varphi_\xi\big(L|u-y|\big)\varphi_\xi \big(L|v-x|\big)\varphi_\xi\big(L|y-v|\big)\ud x\ud y\ud v\ud u,\\
A_2 &= \alpha_H\int_{[0,T]^5}\varphi_\xi\big(L|u-x_1|\big)\varphi_\xi \big(L|u-y|\big)\varphi_\xi\big(L|v-x_2|\big)\varphi_\xi\big(L|y-v|\big)\\
& \quad \times|x_1-x_2|^{2H-2}\ud x_1\ud x_2\ud y\ud v\ud u,\\
A_3 &= \alpha_H^2\int_{[0,T]^6}\varphi_\xi\big(L|u-x_1|\big)\varphi_\xi\big(L|u-y_1|\big)\varphi_\xi\big(L|v-x_2|\big)\varphi_\xi\big(L|y_2-v|\big)\\
& \quad \times |x_1-x_2|^{2H-2}|y_1-y_2|^{2H-2}\ud x_1\ud x_2\ud y_1\ud y_2\ud v\ud u,\\
A_4 &= \alpha_H^3\int_{[0,T]^7}\varphi_\xi\big(L|u_1-x_1|\big)\varphi_\xi\big(L|v_1-y_1|\big)\varphi_\xi\big(L|v-x_2|\big)\varphi_\xi\big(L|y_2-v|\big)\\
& \quad \times|x_1-x_2|^{2H-2}|y_1-y_2|^{2H-2}|u_1-v_1|^{2H-2}\ud x_1\ud x_2\ud y_1\ud y_2\ud v_1\ud u_1 \ud v,\\
A_5 &= \alpha_H^4\int_{[0,T]^8}\varphi_\xi\big(L|u_1-x_1|\big)\varphi_\xi\big(L|v_1-y_1|\big)\varphi_\xi\big(L|u_2-x_2|\big)\\
& \quad \times\varphi_\xi \big(L|y_2-v_2|\big)|x_1-x_2|^{2H-2}|y_1-y_2|^{2H-2}|u_1-v_1|^{2H-2}\\
& \quad \times|u_2-v_2|^{2H-2}\ud x_1\ud x_2\ud y_1\ud y_2\ud v_1\ud u_1 \ud v_2\ud u_2.
\end{align*}
Next, we prove that $A_3 \leq CL^{-3}$. First, by change of variables
we obtain
\begin{align*}
A_3 &= C L^{-4H-2}\int_{[0,L]^6}\varphi_\xi\big(T|u-x_1|\big)\varphi_\xi\big(T|u-y_1|\big)\varphi_\xi\big(T|v-x_2|\big)\varphi_\xi\big(T|y_2-v|\big)\\
& \quad \times |x_1-x_2|^{2H-2}|y_1-y_2|^{2H-2}\ud x_1\ud x_2\ud y_1\ud y_2\ud v\ud u.
\end{align*}
Note that Assumption \ref{assumption:main} implies that $\int_0^L
\varphi_\xi(T|x-y|)\ud x \leq C$, where the constant $C$ does not
depend on $L$ and $y$. Similarly, we have
\[
\int_0^L |x-y|^{2H-2}\ud x \leq
CL^{2H-1},
\]
where again the constant $C$ is independent of $L$ and $y$. Moreover,
we have $\varphi_\xi(T|u-v|)\leq1$ for any $u,v \in\R$. Hence, we can
estimate
\begin{align*}
%
A_3 &\leq C L^{-4H-2}\int_{[0,L]^6} \varphi_\xi\big(T|u-x_1|\big) \,\varphi_\xi \big(T|u-y_1|\big) \, \varphi_\xi\big(T|v-x_2|\big)\\
& \quad \times \varphi_\xi\big(T|y_2-v|\big) \,|x_1-x_2|^{2H-2}|y_1-y_2|^{2H-2}\ud x_1\ud x_2\ud y_1\ud y_2\ud v\ud u\\
&\leq C L^{-4H-2}\int_{[0,L]^6} 1\times\varphi_\xi\big(T|u-y_1|\big) \, \varphi _\xi\big(T|v-x_2|\big)\, \varphi_\xi\big(T|y_2-v|\big)\\
& \quad \times |x_1-x_2|^{2H-2}|y_1-y_2|^{2H-2}\ud x_1\ud x_2\ud y_1\ud y_2\ud v\ud u\\
&= CL^{-4H-2} \int_{[0,L]^4} \varphi_\xi\big(T|v-x_2|\big)\, \varphi_\xi \big(T|y_2-v|\big) \, |y_1-y_2|^{2H-2}\\
& \quad \times \biggl(\int_{[0,L]^2} \varphi_\xi\big(T|u-y_1|\big)\, |x_1-x_2|^{2H-2}\ud u \ud x_1\biggr)\ud x_2\ud y_1\ud y_2\ud v\\
&\leq CL^{-4H-2}\times L^{2H-1}\int_{[0,L]^4}\varphi_\xi\big(T|v-x_2|\big) \, \varphi_\xi\big(T|y_2-v|\big)\\
& \quad \times|y_1-y_2|^{2H-2} \ud x_2\ud y_1\ud y_2\ud v\\
&= CL^{-2H-3}\int_{[0,L]^3} \varphi_\xi\big(T|y_2-v|\big)\, |y_1-y_2|^{2H-2}\\
& \quad \times \Biggl(\int_0^L \varphi_\xi\big(T|v-x_2|\big) \ud x_2 \Biggr)\ud y_1\ud y_2\ud v\\
&\leq CL^{-2H-3}\int_{[0,L]^2} |y_1-y_2|^{2H-2}\Biggl(\int_0^L \varphi _\xi\big(T|y_2-v|\big)\ud v \Biggr) \ud y_1\ud y_2\\
&\leq CL^{-2H-3}\int_{[0,L]^2}|y_1-y_2|^{2H-2}\ud y_1\ud y_2 =CL^{-3}.
%
\end{align*}
To conclude, treating $A_1$, $A_2$, $A_4$, and $A_5$ similarly, we
deduce that
\[
\sum_{k=1}^5 |A_k| \leq
CL^{-3}.
\]
Hence, by applying $\Vert\psi_L\Vert^2_{\HH^{\otimes2}} \sim L^{-1}$
we obtain \eqref{apu_conv2}, which completes the proof.

\subsection{Analysis of the variance}\label{B}\vspace{4pt}
We have
\begin{align}
A_2 &= \alpha_H\int_{[0,T]^3}
\varphi_\xi\big(L|t-u|\big)\varphi_\xi \big(L|s-u|\big)|t-s|^{2H-2}
\ud t\ud s\ud u,
\\[4pt]
A_3 &= \alpha_H^2\int_{[0,T]^4}
\varphi_\xi\big(L|t-u|\big)\varphi_\xi \big(L|s-v|\big)|t-s|^{2H-2}|v-u|^{2H-2}
\ud u\ud v\ud t\ud s,
\end{align}
which, by change of variable, leads to

\begin{align*}
A_2 &= \alpha_HT^{2H+1}L^{-2H-1}\int_{[0,L]^3}\varphi_\xi \big(T|t-u|\big)\varphi_\xi\big(T|s-u|\big)|t-s|^{2H-2}\ud t\ud s\ud u,\\
A_3 &= \alpha_H^2T^{4H}L^{-4H}\\
&\quad \times \int_{[0,L]^4}\varphi_\xi\big(T|t-u|\big)\varphi_\xi\big(T|s-v|\big)|t-s|^{2H-2}|v-u|^{2H-2} \ud u\ud v\ud t\ud s.
\end{align*}
We begin with the term $A_2$. Denote
\[
\tilde{A}_2(L) = \int_{[0,L]^3}\varphi_\xi\big(T|t-u|\big)
\varphi_\xi \big(T|s-u|\big)|t-s|^{2H-2}\ud t\ud s\ud u.
\]
By differentiating we get
\begin{align*}
\frac{\ud\tilde{A}_2}{\ud L}(L)&= 2 \int_{[0,L]^2}\varphi_\xi \big(T|L-u|\big)\varphi_\xi\big(T|u-v|\big)|L-v|^{2H-2}\ud v\ud u\\
&\quad + \int_{[0,L]^2} \varphi_\xi\big(T|L-u|\big)\varphi_\xi\big(T|L-v|\big)|u-v|^{2H-2}\ud u\ud v\\
&=:J_1+J_2.
\end{align*}
First, we analyze the term $J_1$. Similarly to Appendix A, we assume
that $\varphi_\xi\geq0$. Hence, we have
\begin{equation*}
\begin{split}
\frac{1}{2} J_1 &= \int_{[0,L]^2}\varphi_\xi\big(T|L-u|\big)\varphi_\xi\big(T|u-v|\big)|L-v|^{2H-2}\ud v\ud u\\
&= \int_{[0,L]^2}\varphi_\xi(Tu)\varphi_\xi\big(T|u-v|\big)v^{2H-2}\ud v\ud u\\
&=\int_0^L \int_1^L\varphi_\xi(Tu)\varphi_\xi\big(T|u-v|\big)v^{2H-2}\ud v\ud u\\
&\quad + \int_0^L \int_0^1\varphi_\xi(Tu)\varphi_\xi\big(T|u-v|\big)v^{2H-2}\ud v\ud u\\
&\leq\int_0^L \int_1^L\varphi_\xi(Tu)\varphi_\xi\big(T|u-v|\big)\ud v\ud u + \int_0^L \int_0^1\varphi_\xi(Tu)v^{2H-2}\ud v\ud u\\
&\leq C.
\end{split} %
\end{equation*}
For the term $J_2$, we write
\begin{equation*}
\begin{split} J_2 &= \int_{[0,L]^2}
\varphi_\xi\big(T|L-u|\big)\varphi_\xi \big(T|L-v|\big)|u-v|^{2H-2}
\ud u\ud v
\\[2pt]
&= \int_{[0,L]^2} \varphi_\xi(Tu)
\varphi_\xi(Tv)|u-v|^{2H-2}\ud u\ud v
\\[2pt]
&=2 \int_0^L \int_0^t
\varphi_\xi(Tu)\varphi_\xi(Tv) (v-u)^{2H-2}\ud u
\ud v
\\[2pt]
&= 2 \Biggl(\int_0^1\int_0^t
+ \int_1^L \int_0^{t-1}
+ \int_1^L \int_{t-1}^t
\Biggr)\varphi_\xi(Tu)\varphi_\xi(Tv)
(v-u)^{2H-2}\ud u\ud v
\\
&=: J_{2,1}+J_{2,2}+J_{2,3}. \end{split}
\end{equation*}
Now, it is straightforward to show that $J_{2,1}+J_{2,2} \leq C$.
Consequently, as $L \to\infty$, we obtain
\[
A_2 \sim L^{-2H-1} \tilde{A}_2 \sim
L^{-2H}(J_1+J_{2,1}+J_{2,2}+J_{2,3}),
\]
where
\[
L^{-2H}(J_1+J_{2,1}+J_{2,2}) \sim
L^{-2H}.
\]
For the term $J_{2,3}$, we write
\[
J_{2,3}(L)= \int_1^L \int
_{t-1}^t\varphi_\xi(Tu)
\varphi_\xi (Tv) (v-u)^{2H-2}\ud u\ud v,
\]
so that
\begin{equation*}
\begin{split} \frac{\ud J_{2,3}}{\ud L}(L)&= \int_{L-1}^L
\varphi_\xi(TL)\varphi_\xi (Tv) (L-v)^{2H-2}\ud v
\\
&\leq C\varphi_\xi(TL). \end{split} %
\end{equation*}
Hence, by L'H\^opital's rule we have $L^{-2H}J_{2,3} \sim
L^{1-2H}\varphi_\xi(TL)$. On the other hand, we have $\varphi_\xi(TL) =
o (L^{2H-2} )$ since $\varphi$ is integrable by Assumption
\ref{assumption:main}. Hence,\break $L^{-2H}J_{2,3} = o (L^{-1} )$,
which shows that $\lim_{L\to\infty} LA_2 =0$. Consequently, $A_2$ does
not affect the variance. The term $A_3$ is easier and can be treated
with similar elementary computations together with L'H\^opital's rule.
As a consequence, we obtain $A_3 \sim L^{-2H}$, so that $\lim_{L\to
\infty} L A_3 = 0$. Hence, $A_3$ does not affect the variance either,
which justifies \eqref{eq:variance}.

\end{document}